\documentclass[11pt]{amsart}
\usepackage{amsfonts,amssymb,amscd,amsmath,enumerate,verbatim,calc}
\usepackage{amsmath}
\usepackage{amssymb}
\usepackage{amscd}

\def\NZQ{\mathbb}               
\def\NN{{\NZQ N}}
\def\QQ{{\NZQ Q}}
\def\ZZ{{\NZQ Z}}
\def\RR{{\NZQ R}}
\def\CC{{\NZQ C}}

\def\PP{{\NZQ P}}

%
%
%
%

\newtheorem{Theorem}{Theorem}[section]
\newtheorem{Lemma}[Theorem]{Lemma}
\newtheorem{Corollary}[Theorem]{Corollary}
\newtheorem{Proposition}[Theorem]{Proposition}
\newtheorem{Remark}[Theorem]{Remark}

\newtheorem{Definition}[Theorem]{Definition}

%
%
\let\epsilon\varepsilon
\let\phi=\varphi
\let\kappa=\varkappa

%
%
\textwidth=15cm \textheight=22cm \topmargin=0.5cm
\oddsidemargin=0.5cm \evensidemargin=0.5cm \pagestyle{plain}
\begin{document}

\title{Asymptotic Regularity of  Powers of Ideals of  Points in a Weighted Projective Plane}

\author{Steven Dale Cutkosky  and Kazuhiko Kurano}
\thanks{2000 Mathematics Subject Classifications: 13A99, 14Q10}
\thanks{The first author was partially supported by NSF}
\thanks{The second author was supported by KAKENHI (21540050)}

\address{Steven Dale Cutkosky, Department of Mathematics,
University of Missouri, Columbia, MO 65211, USA}
\email{cutkoskys@missouri.edu}

\address{Kazuhiko Kurano,  Department of Mathematics,
School of Science and Technology, Meiji University,
Higashimata 1-1-1, Tama-ku, Kawasaki-shi 214-8571, Japan}
\email{kurano@math.meiji.ac.jp}

\maketitle

\begin{center}
{\it
Dedicated to the memory of Professor Masayoshi Nagata}
\end{center}

\begin{abstract}
In this paper we study the asymptotic behavior of the regularity of  symbolic powers of ideals 
of  points in a weighted projective plane.
By a result of Cutkosky, Ein and Lazarsfeld~\cite{CEL},
regularity of such powers behaves asymptotically like a linear function.
We study the difference between regularity of such powers and this linear function.
Under some conditions, we prove that this difference is bounded, or eventually periodic.

As a corollary we show that, if there exists a negative curve, 
 then the regularity of  symbolic powers of a monomial space curve is eventually 
a periodic linear function. 
We  give a criterion for the validity of Nagata's conjecture in terms of the lack of existence of 
negative curves.
\end{abstract}

\section{Introduction}

Suppose that $H$ is an ample $\QQ$-Cartier divisor on a normal projective variety $V$, and 
$\mathcal I$ is an ideal sheaf on $V$. Let $\nu:W\rightarrow V$ be the blow up
of $\mathcal I$. Let $E$ be the effective Cartier divisor on $W$ defined by
${\mathcal O}_W(-E)={\mathcal I}{\mathcal O}_W$. The $s$-invariant, $s_{\mathcal O_V(H)}({\mathcal I})$, is defined by
$$
s_{\mathcal O_V(H)}({\mathcal I})={\rm inf}\{s\in\RR\mid \nu^*(sH)-E\mbox{ is an ample $\RR$-divisor on $W$}\}.
$$
The reciprical, $\frac{1}{s_H({\mathcal I})}$, is the Seshadri constant of ${\mathcal I}$.

Examples in \cite{C1} and \cite{CEL} show that 
$s_H({\mathcal I})$ can be irrational, even when $\mathcal O_V(H)\cong {\mathcal O}_{\PP}(1)$
on ordinary projective space.

Suppose that $K$ is a field, and $a_0,\ldots,a_{\overline n}$ are  positive  integers. Let $S=K[x_0,\ldots,x_{\overline n}]$ be
a polynomial ring, graded by the  weighting $\text{wt}(x_i)=a_i$ for $0\le i\le \overline n$. Let $\mathfrak m$ be the graded maximal ideal of $S$. Let
$\PP=\PP(a_0,a_1,\ldots,a_{\overline n})={\rm proj}(S)$ be the associated weighted projective space. $\PP$ is a normal projective variety. 
$\PP$ is  isomorphic to a weighted projective space in which ${\rm gcd}(a_0,a_1,\ldots,a_{i-1},a_{i+1}\ldots,a_{\overline n})=1$ for $0\le i\le \overline n$ \cite{De}, \cite{Do}.

We will  suppose through most of this paper that $a_0,\ldots,a_{\overline n}\in\ZZ_+$ 
satisfy the condition that ${\rm gcd}(a_0,a_1,\ldots,a_{i-1},a_{i+1}\ldots,a_{\overline n})=1$ for $0\le i\le \overline n$.
With this assumption on the $a_i$, there exists a Weil divisor $H$ on $\PP$ such that $\mathcal O_{\PP}(r)\cong \mathcal O_{\PP}(rH)$ is a divisorial sheaf of $\mathcal O_{\PP}$ modules (reflexive of rank 1) for all $r\in\ZZ$ \cite{Mo}.

Let $M$ be a finitely generated, graded $S$-module. The local cohomology modules $H_{\mathfrak m}^i(M)$ are naturally graded.  The regularity
of $M$ is defined (\cite{EG}) by
$$
{\rm reg}(M)={\rm max}\{i+j\mid H_{\mathfrak m}^i(M)_j\ne 0\}.
$$

Suppose that $I\subset S$ is a homogeneous ideal. Let $I^{{\rm sat}}$ be the saturation of $I$ with respect to the graded maximal ideal of  $I$.
Let $\mathcal I$ be the sheaf associated to $I$ on $\PP$.
Let $X=X(I)={\rm proj}(\bigoplus_{m\ge 0}{\mathcal I}^m)$ be the blow up of $\mathcal I$, with natural
projection $f:X\rightarrow \PP$.

In Section \ref{sec2}, we develop the basic properties of regularity on weighted projective space, and show that the theory of asymptotic  regularity on
ordinary projective space extends naturally to weighted projective space. For instance, the statement on ordinary projective space, proven in Theorem 1.1 of \cite{CHT}, or in \cite{Ko},
extends to show that ${\rm reg}(I^m)$ is a linear function for $m\gg 0$. We also establish  the following basic result, which generalizes the statement for ample line bundles proven in Theorem B of \cite{CEL} to the $\QQ$-Cartier Weil divisor $\mathcal O_{\PP}(1)$ on weighted projective space (with the $a_i$ pairwise relatively prime).

\begin{Theorem}\label{Theoremlim} We have that
$$
\lim_{m\rightarrow \infty} \frac{{\rm reg}((I^m)^{{\rm sat}})}{m}=s_{\mathcal O_{\PP}(1)}({\mathcal I}).
$$
\end{Theorem}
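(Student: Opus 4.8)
The plan is to adapt the proof of Theorem B of \cite{CEL} to the weighted setting, using the blow-up $f:X\to\PP$ and comparing cohomology of powers of $\mathcal I$ on $\PP$ with cohomology of multiples of $-E$ on $X$. The two inequalities $\limsup_{m}\frac{{\rm reg}((I^m)^{\rm sat})}{m}\le s_{\mathcal O_\PP(1)}(\mathcal I)$ and $\liminf_m\frac{{\rm reg}((I^m)^{\rm sat})}{m}\ge s_{\mathcal O_\PP(1)}(\mathcal I)$ are handled separately, and each rests on relating the grading of local cohomology $H^i_{\mathfrak m}(S/I^m)_j$ to vanishing of sheaf cohomology $H^i(\PP,\mathcal I^m(j))$ (for $i\ge 1$), which is exactly the content of the regularity theory developed in Section~\ref{sec2} on weighted projective space.

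First I would record, from Section~\ref{sec2}, the sheaf-cohomological reformulation: for a homogeneous ideal $I$, ${\rm reg}((I^m)^{\rm sat})\le d$ iff $H^i(\PP,\widetilde{I^m}(d-i))=0$ for all $i\ge 1$ together with the surjectivity/$0$-th graded piece condition, so up to a bounded error ${\rm reg}((I^m)^{\rm sat})$ measures the largest twist needed to kill higher cohomology of $\mathcal I^m$. Pushing forward along $f$, since $\mathcal I^m\mathcal O_X=\mathcal O_X(-mE)$ and (by a Serre-vanishing/relative-ampleness argument, valid because $-E$ is $f$-ample) $R^jf_*\mathcal O_X(-mE)=0$ for $j>0$ and $m\gg0$, I get $H^i(\PP,\mathcal I^m(d))\cong H^i(X,\mathcal O_X(f^*(dH)-mE))$ for $m\gg0$. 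So ${\rm reg}((I^m)^{\rm sat})$ is, up to $O(1)$, the infimum of $d$ such that $f^*(dH)-mE$ has vanishing higher cohomology.

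For the upper bound $\limsup\le s$: if $d/m>s_{\mathcal O_\PP(1)}(\mathcal I)$ then $f^*(dH)-mE=m(f^*(\frac dm H)-E)$ is ample on $X$, so by Serre vanishing (made uniform in $m$ via the standard Fujita-type/openness-of-ampleness argument, or by passing to $\mathcal O(1)$-rationality of the relevant cone as in \cite{CEL}) its higher cohomology twisted appropriately vanishes, giving ${\rm reg}((I^m)^{\rm sat})\le dm + C$. For the lower bound $\liminf\ge s$: if $d/m<s_{\mathcal O_\PP(1)}(\mathcal I)$ then $f^*(\frac dm H)-E$ is not ample; one produces a curve $C$ on $X$ with $(f^*(dH)-mE)\cdot C<0$ (or uses the numerical characterization of the $s$-invariant), and then a curve with negative intersection against an invertible sheaf forces nonvanishing of $H^1$ of a suitable twist, so ${\rm reg}((I^m)^{\rm sat})\ge dm - C'$. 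Dividing by $m$ and letting $m\to\infty$ squeezes the limit to $s_{\mathcal O_\PP(1)}(\mathcal I)$; existence of the limit (not just limsup/liminf) follows because ${\rm reg}(I^m)$ — hence ${\rm reg}((I^m)^{\rm sat})$ up to bounded correction — is eventually linear by the weighted analogue of \cite{CHT}, \cite{Ko} established in Section~\ref{sec2}.

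**The main obstacle** is making the cohomology-vanishing estimates \emph{uniform in} $m$ on the weighted projective space, where $\mathcal O_\PP(1)$ is only $\QQ$-Cartier: one must control the bounded error terms coming from (a) the difference between ${\rm reg}(I^m)$ and ${\rm reg}((I^m)^{\rm sat})$, (b) the range of $m$ and the twist for which $R^jf_*\mathcal O_X(-mE)$ vanishes, and (c) the $a_i$-torsion phenomena in the grading of $H^i_{\mathfrak m}$ that are invisible on ordinary projective space. Each of these is $O(1)$ in $m$ — so they wash out in the limit — but verifying that requires the weighted Serre-duality and finite-generation statements from Section~\ref{sec2}; once those are in hand, the argument is the standard one from \cite{CEL}.
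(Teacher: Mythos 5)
Your outline takes a genuinely different route from the paper, and as written it has a real gap at the lower bound. The step ``a curve with negative intersection against an invertible sheaf forces nonvanishing of $H^1$ of a suitable twist'' is not a valid principle: on the blow-up of a point of $\PP^2$ with exceptional curve $E$ one has $(E\cdot E)<0$ and yet $H^1(X,\mathcal O_X(E))=0$, and in general non-nefness of $f^*(dH)-mE$ gives no cohomological nonvanishing for the particular twists $d'A-mE$ with $d'$ close to $dm$ that you need. In \cite{CEL} the inequality $\liminf_m {\rm reg}/m\ge s$ is obtained by a different mechanism: regularity bounds the generating degree, generation in degree $d$ makes $\mathcal I^m\otimes\mathcal O(d)$ globally generated, hence $d\,\nu^*H-mE$ is nef, hence $d\ge sm$. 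Transplanting that to $\PP(a_0,\ldots,a_{\overline n})$ is exactly where the difficulty sits, since $\mathcal O_{\PP}(1)$ is only $\QQ$-Cartier and Mumford's ``regularity implies global generation'' is not available for the nonstandard grading; your assertion that all such discrepancies are $O(1)$ in $m$ is the thing that needs proof, not a remark. (Indeed, producing nonvanishing $h^1$ just below the $s$-ray is nontrivial even for fat points on a weighted plane --- that is the whole content of Proposition \ref{Prop3}, proved by Riemann--Roch and restriction to a negative curve.) Two further soft spots: the identification $H^i(\PP,\mathcal I^m(d))\cong H^i(X,\mathcal O_X(f^*(dH)-mE))$ uses the projection formula with the non-invertible sheaf $\mathcal O_{\PP}(d)$ and the equality $f_*\mathcal O_X(-mE)=\mathcal I^m$; the former is justified in Section \ref{sec3} only because the blown-up points are nonsingular points of $\PP$, while Theorem \ref{Theoremlim} concerns an arbitrary homogeneous ideal, and the latter in general yields only the integral closure of $\mathcal I^m$. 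Also, ${\rm reg}((I^m)^{\rm sat})$ is \emph{not} within a bounded correction of ${\rm reg}(I^m)$ (Example 4.4 of \cite{CHT} gives a saturated regularity that is not eventually linear), so your final appeal to eventual linearity is both unnecessary and incorrect; the squeeze alone must produce the limit.

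For comparison, the paper does not re-run the CEL argument at all: it reduces to ordinary projective space via the flat graded inclusion $K[x_1,\ldots,x_s]\rightarrow K[y_1,\ldots,y_s]$, $x_i\mapsto y_i^{d_i}$. This shifts regularity by the constant $\sum_i d_i-s$ (formula (\ref{basechange})), carries $(I^m)^{\rm sat}$ to $(I^mB_2)^{\rm sat}$, and the induced finite morphism of blow-ups preserves the $s$-invariant by the projection formula (formula (\ref{Ses})); Theorem B of \cite{CEL} is then quoted on $\PP^{s-1}$. If you pursue your direct approach, you will in effect have to reprove these comparison statements (or the global-generation ingredient) anyway, which is why the covering trick is the efficient path.
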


In general, as commented above, this limit is irrational, so ${\rm reg}((I^m)^{{\rm sat}})$ is in general far from being a linear function. 

We will write  $\lfloor x\rfloor$ for the greatest integer in a real number $x$.
We may define a function $\sigma_I:\NN\rightarrow \ZZ$ by
$$
{\rm reg}((I^m)^{{\rm sat}})=\lfloor m s_{\mathcal O_{\PP}(1)}(\mathcal I)\rfloor +\sigma_I(m).
$$
By Theorem \ref{Theoremlim}, we have that
$$
\lim_{m\rightarrow \infty}\frac{\sigma_I(m)}{m}=0.
$$
An interesting question is to determine when $\sigma_I(m)$ is bounded.
We do not know of an example where $\sigma_I(m)$ is not bounded.

In this paper, we study the case where $I\subset S=K[x,y,z]$ is the ideal of a set of nonsingular points with multiplicity (a ``fat point'') in a weighted two dimensional projective space $\PP=\PP(a,b,c)$, with ${\rm wt}(x)=a$, ${\rm wt}(y)=b$, ${\rm wt}(z)=c$ and
$a,b,c$ pairwise relatively prime. We also assume that $K$ is algebraically closed. Suppose that $P_1,\ldots, P_r$ are distinct nonsingular closed points in 
$\PP(a,b,c)$, and $e_i$ are positive integers.
Let $I_{P_i}\subset S=K[x,y,z]$ be the weighted homogeneous ideal of the point
$P_i$, and let $I=\cap_{i=0}^rI_{P_i}^{e_i}$.
Let ${\mathcal I}=\tilde I$ be the sheafication of $I$ on $\PP$,
and define
$$
s(I)=s_{\mathcal O_{\PP}(1)}({\mathcal I}).
$$
Let $u=\sum_{i=1}^re_i^2$.
 We have that $s(I)\ge \sqrt{abcu}$. If 
$s(I)>\sqrt{abcu}$, then $s(I)$ is a rational number.

Nagata's conjecture states that $s(I)=\sqrt{r}$ if $r\ge 9$, $e_i=1$ for $1\le i\le r$ and $P_1,\ldots, P_r$ are independent generic points in ordinary projective space $\PP^2$. Nagata proved this conjecture in \cite{N1} in the case that $r$ is a perfect square, as a critical ingredient in his counterexample to Hilbert's fourteenth problem.
A proof of Nagata's conjecture in the case of an $r$ which is not a perfect square would give a set of points in $\PP^2$ for which $s(I)$ is not rational. Some recent papers on regularity and $s$-invariants of points in $\PP^2$ are \cite{CTV}, \cite{DG}, \cite{Gi}, \cite{HR} and \cite{Hi}.

Let $I^{(m)}$ be the $m$-th symbolic power of $I$, which is also, in our
situation, the saturation $(I^m)^{{\rm sat}}$ of $I^m$ with respect to the
graded maximal ideal $\mathfrak m$ of $S$.

We prove the following asymptotic statements about regularity in Section \ref{sec3}.

A function $\sigma:\NN\rightarrow \ZZ$ is bounded if there exists $c\in\NN$ such that
$|\sigma(m)|<c$ for all $m\in\NN$. In Theorem \ref{Theorem4}, we prove:
\vskip .1truein
\noindent {\it Let $I$ be the ideal of a set of fat points in a weighted projective plane. Then
$\sigma_I(m)$ is a bounded function.} 
\vskip .1truein

If the graded $K$-algebra $\bigoplus_{m\ge 0}I^{(m)}$ is a finitely generated $K$-algebra, then ${\rm reg}(I^{(m)})$ must be  a quasi polynomial for large $m$. 
A quasi polynomial is a polynomial in $m$ with coefficients which are periodic functions in $m$. In general, $\bigoplus_{m\ge 0}I^{(m)}$ is not a finitely generated $K$-algebra. Some examples where this  algebra is not finitely generated are given by Nagata's Theorem \cite{N1}, showing that it is not finitely generated when $r\ge 9$ is a perfect square and $r$ generic points in $\PP^2$ are blown up. Goto, Nishida and Watanabe \cite{GNW} give examples of monomial primes
$P(a,b,c)$ such that the symbolic algebra is not finitely generated.

A function $\sigma:\NN\rightarrow \ZZ$ is {\it eventually periodic} if $\sigma(m)$ is periodic for
 $m\gg 0$. In Theorem \ref{Theorem5}, we prove:
\vskip .1truein
\noindent {\it  Suppose that $s(I)>\sqrt{abcu}$
and $K$ has characteristic zero or is the algebraic closure of a finite field.
Then the function $\sigma_I(m)$  is
eventually periodic.}
\vskip .1truein

An example, defined over a field $K$ which is of positive characteristic and is transcendental over the prime field, where $s(I)>\sqrt{abcu}$  but $\sigma(m)$ is not eventually periodic, is given in Example 4.4 \cite{CHT}. In this  example, constructed from
17 special points $P_i$ in ordinary projective space $\PP^2$,
$I=I_{P_1}\cap \cdots \cap I_{P_{13}}\cap I_{P_{14}}^2\cap \cdots \cap I_{P_{17}}^2$. We have $s(I)=\frac{29}{5}>\sqrt{abcu}=\sqrt{29}$ in this example.

Let $H$ be a Weil divisor on $\PP$ such that $\mathcal O_{\PP}(1)\cong \mathcal O_{\PP}(H)$ and let $A= f^*(H)$.
An effective divisor $D$ on $X$ such that $(D\cdot D)<0$ will be called a negative curve.
An effective divisor $D$ such that $D\sim aA-mE$ for some positive integers $a$ and $m$ will be called an $E$-uniform curve.

We establish in Corollary \ref{Cor6} that
\vskip .1truein
\noindent {\it Suppose  there exists an $E$-uniform negative curve on $X(I)$,
and $K$ has characteristic zero, or is a finite field. Then $s(I)$ is a rational number and the function $\sigma_I(m)$ of Theorem \ref{Theorem4} is
eventually periodic.}
\vskip .1truein

An important case of this construction is when $i=1$, and
$I=P(a,b,c)$ is the prime ideal of a monomial space curve. The ideal $P(a,b,c)$ is defined to be the kernel of the $K$-algebra homomorphism $K[x,y,z]\rightarrow K[t]$, given by $x\mapsto t^a$, $y\mapsto t^b$, $z\mapsto t^c$.
As a corollary to Theorems \ref{Theorem4} and \ref{Theorem5}, we have the following application, Corollary \ref{Cor7*},  to  monomial space curves.
\vskip .1truein

\noindent  {\it Suppose that $I=P(a,b,c)$ is the prime ideal of a monomial space curve, and   there exists a  negative curve
on $X(I)$.
 Then $s(I)$ is a rational number, and the function $\sigma_I(m)$  is
eventually periodic.} 
\vskip .1truein
 
 We do not know of an example of  a monomial prime  $I=P(a,b,c)$ where there does not exist a negative curve. This interesting problem 
 is discussed in \cite{KM}.
 
In Section \ref{sec5}, we  give a criterion for
the validity of Nagata's conjecture in terms of the lack of existence of uniform negative curves on certain weighted projective planes.

\section{Regularity on Weighted Projective Space}\label{sec2}

In this section we define the regularity of
a finitely generated graded module
over a non-standard graded polynomial ring.

Let $K$ be a field and $B = K[x_1, \ldots, x_s]$ be
a graded polynomial ring with ${\rm wt}(x_1)=d_1$, \ldots,
${\rm wt}(x_s)=d_s$, where
$d_1$, \ldots, $d_s$ are positive integers.
Set ${\mathfrak m} = (x_1, \ldots, x_s)B$.

\begin{Definition}
\begin{rm}
For a finitely generated $B$-module $M \neq 0$,
We define $a_i(M)$, ${\rm reg}(M)$, ${\rm reg}_i(M)$ and
${\rm reg}'(M)$ as follows:
\begin{eqnarray}
a_i(M) & = & 
\left\{
\begin{array}{lll}
{\rm max}\{ j \in \ZZ \mid H^i_{\mathfrak m}(M)_j\ne 0 \}
& & \mbox{if $H^i_{\mathfrak m}(M)\ne 0$} \\
-\infty & & \mbox{otherwise}
\end{array}
\right.
\nonumber
\\
{\rm reg}(M) & = & 
{\rm max}\{ i+j \mid H^i_{\mathfrak m}(M)_j\ne 0 \}
= {\rm max}\{ a_i(M)+i \mid 0 \le i \le \dim M \}
\label{eqLC4}
\\
{\rm reg}_i(M) & = & 
\left\{
\begin{array}{lll}
{\rm max}\{ j \in \ZZ \mid 
{\rm Tor}^B_i(M, B/{\mathfrak m})_j\ne 0 \} - i
& & \mbox{if ${\rm Tor}^B_i(M, B/{\mathfrak m})\ne 0$} \\
-\infty & & \mbox{otherwise}
\end{array}
\right.
\nonumber
\\
{\rm reg}'(M) & = & 
{\rm max}\{ {\rm reg}_i(M) \mid i \ge 0 \}
\nonumber
\end{eqnarray}
\end{rm}
\end{Definition}

It is not difficult to prove the following theorem (cf.\ Theorem~3.5 in \cite{DS}).
We omit a proof.

\begin{Theorem}
With notation as above,
\[
{\rm reg}(M) = {\rm reg}'(M) + s - \sum_{i = 1}^sd_i.
\]
\end{Theorem}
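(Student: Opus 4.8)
The plan is to reduce the statement to the well-known standard-graded case by a change of grading, using a polynomial extension. Concretely, let $d=\operatorname{lcm}(d_1,\dots,d_s)$ and introduce new variables $y_1,\dots,y_s$ with $\operatorname{wt}(y_i)=1$, so that $B'=B[y_1,\dots,y_s]$ is graded. Inside $B'$ the elements $x_i$ and $y_i^{d_i}$ both have degree $d_i$; but to land in the standard graded setting I would instead pass to the Veronese-type subring, or more simply work with the ring $K[x_1,\dots,x_s]$ with its two gradings related by $\deg_{\mathrm{std}}(x_i)=1$ versus $\operatorname{wt}(x_i)=d_i$. The cleanest route is: first prove the identity when $B$ is \emph{standard} graded (all $d_i=1$), where it is classical that $\operatorname{reg}(M)=\operatorname{reg}'(M)$ — wait, the displayed formula in the standard case reads $\operatorname{reg}(M)=\operatorname{reg}'(M)+s-s=\operatorname{reg}'(M)$, so the asserted theorem is exactly the usual coincidence of the two notions of regularity, Theorem 4.3 of Eisenbud--Goto. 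Then I would bootstrap to the general weighted case.

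For the bootstrap, the key tool is a comparison of local cohomology (for the $a_i(M)$ side) and of Tor (for the $\operatorname{reg}_i$ side) under a grading change. First I would handle the Tor side: a minimal graded free resolution of $M$ over $B$ is simultaneously a minimal graded free resolution over $B$ regardless of which grading we impose, but the \emph{degrees} shift. If $F_\bullet\to M$ is minimal with $F_i=\bigoplus_j B(-b_{ij})$ in the weighted grading, then $\operatorname{reg}_i(M)=\max_j b_{ij}-i$. The relation between weighted degrees and any auxiliary standard degree of the same resolution is what produces the correction term. Second, on the local cohomology side, I would use the fact that $H^i_{\mathfrak m}(M)$ depends only on the ring and module, not the grading, and compute $a_i(M)$ via the graded local duality $H^i_{\mathfrak m}(M)^\vee\cong\operatorname{Ext}^{s-i}_B(M,\omega_B)$, where the canonical module is $\omega_B=B(-\sum_i d_i)$ in the weighted grading (this is precisely where $-\sum d_i$ enters). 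Dualizing the minimal free resolution, $\operatorname{Ext}^{s-i}_B(M,\omega_B)$ is computed from $\operatorname{Hom}(F_{s-i},\omega_B)=\bigoplus_j B(b_{(s-i)j}-\sum_k d_k)$, and chasing the top nonvanishing degree yields $a_i(M)\le -\min_j(b_{(s-i)j})+\sum_k d_k$ type bounds; combining the extremal contributions over all $i$ recovers $\operatorname{reg}(M)=\operatorname{reg}'(M)+s-\sum d_i$ after a careful matching of which homological degrees achieve the maxima.

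The main obstacle I anticipate is making the "matching of extremal degrees" rigorous rather than just an inequality in one direction. The inequality $\operatorname{reg}(M)\le\operatorname{reg}'(M)+s-\sum d_i$ follows fairly directly from the spectral sequence / Koszul complex computation of local cohomology from a free resolution (the graded Čech or Koszul complex on $x_1,\dots,x_s$ has cohomology in internal degrees bounded in terms of the $b_{ij}$ and the $d_i$). The reverse inequality is the delicate part: one must exhibit a nonzero class in some $H^i_{\mathfrak m}(M)_j$ with $i+j$ as large as claimed, and in the non-standard graded case the usual argument (pass to a minimal reduction / generic hyperplane section to reduce dimension) is not available since linear forms of weight $1$ need not exist. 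The remedy is to argue via graded local duality as above, where the statement becomes symmetric: $\operatorname{reg}'(M)$ and $\operatorname{reg}'$ of the dual complex are tied by an exact correspondence of minimal resolutions, so the extremal Betti numbers on one side are literally the extremal Betti numbers on the other side (up to the twist by $\omega_B$), giving equality. I would therefore structure the proof as: (1) recall graded local duality over the weighted polynomial ring with $\omega_B=B(-\sum d_i)$; (2) express both $\operatorname{reg}(M)$ and $\operatorname{reg}'(M)$ in terms of the graded Betti numbers $b_{ij}$ of the minimal free resolution of $M$; (3) observe the two resulting formulas differ precisely by $s-\sum d_i$. Since the paper explicitly says this is "not difficult" and omits the proof, I would keep the write-up brief, citing Theorem 3.5 of [DS] for the analogous computation and emphasizing only the role of the twist in $\omega_B$.
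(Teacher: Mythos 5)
The paper itself offers no proof of this theorem --- it is stated with the remark that it is ``not difficult'' and deferred to Theorem~3.5 of [DS] --- so there is nothing internal to compare your argument against; what matters is whether your outline can be completed, and in essence it can. Your opening reduction (change of grading / Veronese / polynomial extension) is a dead end and you rightly abandon it: the flat base change $x_i\mapsto y_i^{d_i}$ used later in the paper shifts the local-cohomology regularity by a constant, but it does not relate $\operatorname{reg}'$ over the two rings in any controlled way, so the statement does not simply ``bootstrap'' from the standard-graded case. The strategy you settle on --- graded local duality over the weighted polynomial ring with $\omega_B=B(-\sum_i d_i)$, the inequality $\operatorname{reg}(M)\le \operatorname{reg}'(M)+s-\sum_i d_i$ from the \v{C}ech-complex-on-a-resolution spectral sequence (each $H^i_{\mathfrak m}(M)$ is a subquotient of $H^s_{\mathfrak m}(F_{s-i})$, and the top degree of $H^s_{\mathfrak m}(B(-b))$ is $b-\sum_i d_i$), and the reverse inequality via an extremal-corner argument --- is the right one and does carry over verbatim to arbitrary positive weights, precisely because the corner argument never uses linear forms, only minimality of the resolution.

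The gap is that the corner step is asserted rather than argued, and as phrased (``the extremal Betti numbers on one side are literally the extremal Betti numbers on the other side'') it is not a proof: the dual of a minimal free resolution is not a resolution, and $\operatorname{reg}(M)$ cannot in general be ``expressed in terms of graded Betti numbers'' as your step (2) suggests --- only the one inequality plus a single nonvanishing statement are available. What you must actually prove is: set $r=\operatorname{reg}'(M)$ and let $p$ be the \emph{largest} homological index with $\operatorname{reg}_p(M)=r$, so $F_p$ has a generator $e$ of twist $r+p$, every twist in $F_{p+1}$ is at most $r+p$, and every twist in $F_{p-1}$ is at most $r+p-1$. Then in the complex $\operatorname{Hom}(F_\bullet,\omega_B)$ the dual basis element $e^\vee$, sitting in internal degree $\sum_i d_i-(r+p)$, is a cocycle (the entries of $\partial_{p+1}$ lie in $\mathfrak m$, hence have positive degree, while the twist bound on $F_{p+1}$ forces the relevant matrix entries to have degree $\le 0$, so they vanish) and is not a coboundary (the twist bound on $F_{p-1}$ makes the degree-$(\sum_i d_i-(r+p))$ component of $\operatorname{Hom}(F_{p-1},\omega_B)$ map to zero on the $e^\vee$-coordinate). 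Hence $\operatorname{Ext}^p_B(M,\omega_B)_{\sum_i d_i-r-p}\ne 0$, and graded local duality gives $a_{s-p}(M)\ge r+p-\sum_i d_i$, i.e.\ $\operatorname{reg}(M)\ge r+s-\sum_i d_i$, which together with the easy inequality proves the theorem. With this paragraph supplied (and the first paragraph of your proposal deleted), your write-up would be a complete proof of the omitted statement.
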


\begin{Remark}
\begin{rm}
Assume $d_1 = \cdots = d_s = 1$.
Let $I$ be a homogeneous ideal of $B$.
Put $\PP = {\rm proj}(B)$.

The regularity of a coherent ${\mathcal O}_{\PP}$ module 
${\mathcal F}$ is 
\begin{equation}\label{eqLC5}
{\rm reg}({\mathcal F})=
{\rm min}\{l\mid H^i(\PP,{\mathcal F}(j-i))=0
\mbox{ for all $j\ge l$ and $i>0$}\}.
\end{equation}
Let ${\mathcal I}$ be the ideal sheaf on $\PP$ associated to $I$.
We have that 
\begin{equation}\label{eqL6}
{\rm reg}((I^m)^{\rm sat})={\rm reg}({\mathcal I}^m)
\end{equation}
 for all $m\ge 0$, as follows from Theorem A4.1 \cite{E}.

For each $i \ge 0$, ${\rm reg}_i(I^m)$ is eventually linear
on $m$ by Theorem~3.1 in \cite{CHT}.

Hence, ${\rm reg}(I^m)$ is eventually linear on $m$ as in 
Theorem~1.1 (ii) in \cite{CHT}.

On the other hand, there exists an example that
$a_i(I^m)$ is not eventually linear on $m$ as follows.
Let $I$ be the ideal in Example~4.4 in \cite{CHT} (which was refered to in the introduction).
Then,
\[
{\rm reg}((I^m)^{\rm sat}) =
{\rm max}\{ a_2((I^m)^{\rm sat}) + 2, a_3((I^m)^{\rm sat}) + 3 \}
= {\rm max}\{ a_2(I^m) + 2, 0 \} =a_2(I^m)+2
\]
since ${\rm reg}((I^m)^{\rm sat})={\rm reg}'((I^m)^{\rm sat})>0$,
and ${\rm reg}((I^m)^{\rm sat})$ is  not eventually linear in $m$.
Therefore, $a_2(I^m)$ is not eventually linear in $m$ in this case.
\end{rm}
\end{Remark}

In Section \ref{sec3}, we will consider the case when $S=K[x,y,z]$, with ${\rm wt}(x)=a$, ${\rm wt}(y)=b$ and ${\rm wt}(z)=c$
for pairwise relatively prime positive integers $a,b,c$, and $I=I_{P_1}^{e_1}\cap \cdots\cap I_{P_r}^{e_r}$ with $P_i$ distinct nonsingular points of $\PP(a,b,c)$.   We will 
then have that 
\[
{\rm reg}((I^m)^{\rm sat}) =
{\rm max}\{ a_2((I^m)^{\rm sat}) + 2, a_3((I^m)^{\rm sat}) + 3 \}
= {\rm max}\{ a_2(I^m) + 2, 3 - a-b-c \}=a_2(I^m) + 2
\]
since ${\rm reg}((I^m)^{\rm sat})={\rm reg}'((I^m)^{\rm sat})+3-a-b-c>3-a-b-c$,
and (with the notation defined in the introduction)
\[
a_2(I^m) = {\rm max}\{ n \in \ZZ \mid
H^1(X,\mathcal{O}_X(nA - mE))\ne 0 \} .
\]

\begin{Remark}
\begin{rm}
Let $B_1 = K[x_1, \ldots, x_s]$ and $B_2 = K[y_1, \ldots, y_s]$
be graded polynomial rings 
with ${\rm wt}(x_i)=d_i$ ($i = 1, \ldots, s$) and
${\rm wt}(y_j)=d'_j$ ($j = 1, \ldots, s$), where the
$d_i$'s and $d'_j$'s are positive integers.

Let $\delta : B_1 \rightarrow B_2$ be a flat $K$-algebra graded
homomorphism.
Assume that $B_2/(x_1, \ldots, x_s)B_2$ is of finite length.

Set ${\mathfrak m}_1 = (x_1, \ldots, x_s)B_1$
and ${\mathfrak m}_2 = (y_1, \ldots, y_s)B_2$.

Let $M$ be a finitely generated graded $B_1$-module.
Then,
\begin{equation}\label{cohomology}
H^i_{{\mathfrak m}_1}(M) \otimes_{B_1}B_2
= H^i_{{\mathfrak m}_1B_2}(M\otimes_{B_1}B_2)
= H^i_{{\mathfrak m}_2}(M\otimes_{B_1}B_2) .
\end{equation}
Here, set
\[
\xi = {\rm max}\{ n \in \ZZ \mid [B_2/{\mathfrak m}_1B_2]_n \ne 0 \} .
\]
By (\ref{cohomology}), we obtain
\[
a_i(M\otimes_{B_1}B_2)
= a_i(M) + \xi
\]
for $i = 0, \ldots, \dim M$.
Therefore,
\begin{equation}\label{basechange}
{\rm reg}(M\otimes_{B_1}B_2)
= {\rm reg}(M) + \xi .
\end{equation}
\end{rm}
\end{Remark}

From the above Remark, we see that the statement on ordinary projective space, proven in Theorem 1.1 of \cite{CHT}, or in \cite{Ko},
extends to show that ${\rm reg}(I^m)$ is a linear function for $m\gg 0$, when $I$ is a homogeneous ideal with  respect to our weighting.

\begin{Remark}
\begin{rm}
Let $B_1 = K[x_1, \ldots, x_s]$ and $B_2 = K[y_1, \ldots, y_s]$
be graded polynomial rings 
with ${\rm wt}(x_i)=d_i$ ($i = 1, \ldots, s$) and
${\rm wt}(y_j)=1$ ($j = 1, \ldots, s$), where the
$d_i$'s are  positive integers satisfying the condition ${\rm gcd}(d_1,\ldots,d_{i-1},d_{i+1},\ldots,d_s)=1$ for $1\le i\le s$.

Let $\delta : B_1 \rightarrow B_2$ be the $K$-algebra graded
homomorphism satisfying $\delta(x_i) = y_i^{d_i}$ for $i = 1, \ldots, s$.
Remark that $\delta$ is flat, $B_2/(x_1, \ldots, x_s)B_2$ is of finite length and
\[
\xi = {\rm max}\{ n \in \ZZ \mid [B_2/(x_1, \ldots, x_s)B_2]_n \ne 0 \} 
= \sum_{i = 1}^s d_i - s .
\]

Let $I$ be a homogeneous ideal of $B_1$.
Then,
\[
(I^m)^{\rm sat} \otimes_{B_1} B_2 = (I^m)^{\rm sat} B_2 = (I^m B_2)^{\rm sat} 
\]
for any $m > 0$.
Thus, using (\ref{basechange}), we have that
\[
{\rm reg}((I^m B_2)^{\rm sat} ) = {\rm reg}((I^m)^{\rm sat}) + \sum_{i = 1}^s d_i - s
\]
for any $m > 0$.
Therefore,
\begin{equation}\label{lim-reg}
\lim_{m \to \infty} \frac{{\rm reg}((I^m B_2)^{\rm sat} )}{m} =
\lim_{m \to \infty} \frac{{\rm reg}((I^m)^{\rm sat} )}{m} .
\end{equation}

Here, set $X = {\rm proj}(B_1)$, $Z = {\rm proj}(B_2)$. By our condition on the $d_i$, there exists a Weil divisor $H$ on $X$ such that ${\mathcal O}_{X}(r)\cong\mathcal O_{X}(rH)$ is a  reflexive, rank 1 sheaf of ${X}_{\PP}$ modules for all $r\in\ZZ$ \cite{Mo}.
Let $Y$ (resp.\ $W$) be the blow-ups of $X$ (resp.\ $Z$) along the ideal sheaf
${\mathcal I} = \tilde{I}$ (resp.\ ${\mathcal I}{\mathcal O}_Z = \widetilde{I B_2}$).
Then, we have the following Cartesian diagram:
\[
\begin{array}{ccc}
W & \longrightarrow & Z \\
{\scriptstyle f} \downarrow \hphantom{{\scriptstyle f}} & & 
\hphantom{{\scriptstyle f}} \downarrow \hphantom{{\scriptstyle f}} \\
Y & \stackrel{\pi}{\longrightarrow} & X 
\end{array}
\]
Set $E = \pi^{-1}({\rm proj}(B_1/I))$.
Let $\ell$ be a positive integer such that ${\mathcal O}_X(\ell)$ is invertible (we can take $\ell={\rm lcm}(d_1,\ldots,d_s)$).
By the projection formula, for any positive integers $\alpha$ and $\beta$,
${\mathcal O}_Y(-\ell \beta E) \otimes \pi^* {\mathcal O}_X(\ell \alpha)$
is nef if and only if so is $f^*({\mathcal O}_Y(-\ell \beta E) \otimes \pi^* {\mathcal O}_X(\ell \alpha))$.
Hence, $\alpha/\beta \ge s_{{\mathcal O}_Z(1)}({\mathcal I}{\mathcal O}_Z)$
if and only if $\alpha/\beta \ge s_{{\mathcal O}_X(1)}({\mathcal I})$.
Therefore, we obtain
\begin{equation}\label{Ses}
s_{{\mathcal O}_Z(1)}({\mathcal I}{\mathcal O}_Z) = 
s_{{\mathcal O}_X(1)}({\mathcal I}) .
\end{equation}

On the other hand, 
\begin{equation}\label{CELformula}
\lim_{m \to \infty} \frac{{\rm reg}((I^m B_2)^{\rm sat} )}{m} = 
s_{{\mathcal O}_Z(1)}({\mathcal I}{\mathcal O}_Z)
\end{equation}
by Theorem~B in \cite{CEL}.
By (\ref{lim-reg}), (\ref{Ses}) and (\ref{CELformula}), we obtain 
\begin{equation}\label{weightedCELformula}
\lim_{m \to \infty} \frac{{\rm reg}((I^m)^{\rm sat} )}{m} = 
s_{{\mathcal O}_X(1)}({\mathcal I}),
\end{equation}
which is the statement of Theorem \ref{Theoremlim}.
\end{rm}
\end{Remark}

\section{Blow ups of a Weighted Projective Plane}\label{sec3}

Suppose that $G$ is a subgroup of $\RR$. Then $G_+$ will denote the semigroup of
positive elements of $G$, and $G_{\ge 0}$ will denote the semigroup of nonnegative elements of $G$.

In this section, we will suppose that $K$ is an algebraically closed field, and $a,b,c\in\ZZ_+$ are pairwise relatively prime. Let
$\PP=\PP(a,b,c)$ be the corresponding weighted projective space.
Suppose that $P_1,\ldots, P_r$ are distinct nonsingular closed points in 
$\PP(a,b,c)$, and $e_1,\ldots,e_r\in \ZZ_+$. 

The coordinate ring of $\PP(a,b,c)$ is the graded polynomial ring
$$
S=K[x,y,z]=\bigoplus_{n\ge 0}H^0(\PP,\mathcal{O}_{\PP}(n)),
$$
which is graded by ${\rm wt}(x)=a$, ${\rm wt}(y)=b$, ${\rm wt}(z)=c$.
Let $\mathfrak m=(x,y,z)$ be the graded maximal ideal of $S$.

Some references on the geometry of weighted projective spaces are \cite{Do} and \cite{Mo}. We have that $\PP(a,b,c)$ is a normal surface, which is nonsingular, except possibly at the three points $Q_1=V(x,y)$, $Q_2=V(x,z)$ and $Q_3=V(y,z)$. 
Since $a,b,c$ are pairwise relatively prime, there exists a Weil divisor $H$ on $\PP$ such that ${\mathcal O}_{\PP}(r)\cong\mathcal O_{\PP}(rH)$ is a  reflexive, rank 1 sheaf of ${\mathcal O}_{\PP}$ modules for all $r\in\ZZ$ \cite{Mo}.
 The canonical divisor on  $\PP$ is
$\mathcal O_{\PP}(-a-b-c)$. We have that $\mathcal O_{\PP}(\ell)$ is an ample invertible sheaf if $\ell={\rm lcm}(a,b,c)$.

Suppose that $L$ is a finitely generated graded $S$-module. Recall (Section \ref{sec2}) that the regularity, ${\rm reg }{L}$, of $L$
is the largest integer $t$ such that there exists an index $j$ such that $H^j_{\mathfrak
m}(L)_{t-j}\ne 0$.  We will denote the sheaf associated to $L$ on $\PP$ by $\tilde L$.

For all $j\in\ZZ$, we have a natural morphism of sheaves of $\mathcal O_{\PP}$ modules 
$$
\Lambda:\tilde L(j):= \tilde L\otimes \mathcal O_{\PP}(j)\rightarrow\widetilde{L(j)}
$$
which is an isomorphism whenever $\mathcal O_{\PP}(j)$ is Cartier.

Let $I_{P_i}\subset K[x,y,z]$ be the weighted homogeneous ideal of the point
$P_i$, and let $I=\cap_{i=0}^rI_{P_i}^{e_i}$.

An important case  is when $i=1$, and
$I=P(a,b,c)$ is the prime ideal of a monomial space curve.

Let ${\mathcal I}=\tilde I$ be the sheaf associated to $I$ on $\PP$. For $m\in\NN$ and
$n\in\ZZ$,
$$
{\mathcal I}^m(n)\cong\widetilde{I^m(n)}\cong {\mathcal I}^m\otimes{\mathcal O}_{\PP}(n),
$$
since ${\mathcal O}_{\PP}(1)$ is locally free at the support of ${\mathcal O}/{\mathcal I}$.  

Let $I^{(m)}$ be the $m$-th symbolic power of $I$, which is also, in our
situation, the saturation $(I^m)^{{\rm sat}}$ of $I^m$ with respect to the
graded maximal ideal $\mathfrak m$ of $S$.
We have, as follows from Theorem A4.1 \cite{E}, that the graded local cohomology satisfies
\begin{equation}\label{eqLC1}
H^0_{\mathfrak m}(I^{(m)})=H^1_{\mathfrak m}(I^{(m)})=0
\end{equation}
for all $m\in\NN$, and for $i\ge 1$, we have graded isomorphisms
\begin{equation}\label{eqLC2}
H^{i+1}_{\mathfrak m}(I^{(m)})\cong \bigoplus_{n\in\ZZ}H^i(\PP,{\mathcal I}^m(n))
\cong \bigoplus_{n\in\ZZ}H^i(\PP,{\mathcal I}^m\otimes{\mathcal O}_{\PP}(n)).
\end{equation}

Let $f:X=X(I)\rightarrow \PP(a,b,c)$ be the blow up of these
points. Let $E_i$ be the exceptional curves mapping to $P_i$ for $1\le i\le r$, and let $E=e_1E_1+\cdots+e_rE_r$. Let $A$ be a Weil divisor  on $X$ such that 
${\mathcal O}_X(A)\cong f^*{\mathcal O}_{\PP}(1)$ (recall that ${\mathcal O}_{\PP}(1)$ is locally free at points where $f$ is not an isomorphism). 

Since $f$ is the blow up of the nonsingular points $P_1,\ldots, P_r$, 
for $m\ge 0$,
$f_*{\mathcal O}_X(-mE)\cong {\mathcal I}^m$ and $R^if_*{\mathcal O}_X(-mE)=0$ for $i>0$ and $m\ge 0$ (for instance by Proposition 10.2 \cite{Ma}). Since ${\mathcal O}_{\PP}(1)$ is locally free above  all points on $\PP$ where $f$ is not an isomorphism, by the projection formula,
$$
f_*{\mathcal O}_X(nA-mE)\cong {\mathcal I}^m\otimes {\mathcal O}_{\PP}(n)\cong 
{\mathcal I}^m(n)
$$
for all $m\in\NN$ and $n\in\ZZ$. By the  Leray spectral sequence, we have that
\begin{equation}\label{eqLC3}
H^i(X,{\mathcal O}_X(nA-mE))\cong H^i(\PP,{\mathcal I}^m(n))
\end{equation}
for all $m\in\NN$, $n\in\ZZ$ and $i \ge 0$.

The $m$-th symbolic power of $I$ can be computed as 
$$
I^{(m)}=\bigoplus_{n\ge 0}H^0(X,\mathcal{O}_X(nA-mE)).
$$

Let $u=\sum_{i=1}^re_i^2$.

We have 
\begin{equation}\label{eq6}
(A\cdot A)=\frac{1}{abc},\, (E_i\cdot E_i)=-1,\, (A\cdot E_i)=0,\, (E\cdot E)=-u\mbox{ and }(A\cdot E)=0.
\end{equation}

Let ${\rm Div}(X)$ be the group of Weil divisors on $X$. 
There is an intersection theory on ${\rm Div}(X)$, developed in \cite{M}, which associates to Weil divisors $D_1$ and $D_2$ on $X$ a rational number $(D_1\cdot D_2)$.
Divisors $D_1$ and $D_2$ are numerically equivalent, written $D_1\equiv D_2$
if $(D_1\cdot C)=(D_2\cdot C)$ for every Weil divisor $C$ on $X$. A $\QQ$-divisor $D$ on $X$ is called numerically ample if $(D\cdot C)>0$ for all curves $C$ on $X$ and $(D\cdot D)>0$. Let $N_1(X)=({\rm Div}(X)/\equiv)\otimes \RR$.  We will write $\overline D$ to denote the class in $N_1(X)$ of a Weil divisor $D$ on $X$.

Let $L$ be the real vector subspace of $N_1(X)$ spanned by (the classes of) $E$ and $A$.
Let $NL=\overline{{\rm NE}}(X)\cap L$ and let $AL=\overline{{\rm AMP}}(X)\cap L$, where $\overline{{\rm NE}}(X)$ is the closure of the cone of curves on $X$, and 
$\overline{{\rm AMP}}(X)$ is the closure of the ample cone on $X$.

We now make a  sketch of the cones $NL$ and $AL$.  
 $NL$ is a cone with boundary   rays $E\RR_{\ge 0}$ and $R=(\tau A-E)\RR_{\ge 0}$ for some $\tau=\tau(I)\in \RR$.
$AL$ is a cone with boundary  rays $A\RR_{\ge 0}$ and $T=(s A-E)\RR_{\ge 0}$, where $s=s(I)=s_{{\mathcal O}_{\PP}(1)}(\mathcal I)\in\RR$ is the $s$-invariant of $I$.

Let $g:Y\rightarrow X$ be the minimal resolution of singularities. Let ${\mathcal L}$ be a line
bundle on $X$. $X$ has rational singularities implies $H^i(Y,{\mathcal M})=H^i(X,{\mathcal L})$
where ${\mathcal M}=g^*({\mathcal L})$. By the Riemann Roch Theorem on $Y$, ($\chi({\mathcal O}_Y)=1$ since $Y$ is rational)
$$
\chi({\mathcal L})=\chi({\mathcal M})=\frac{1}{2}({\mathcal M}\cdot {\mathcal M}\otimes
\omega_Y^{-1})+1.
$$
By the projection formula,
\begin{equation}\label{eq1}
\chi({\mathcal L})=\frac{1}{2}({\mathcal L}\cdot {\mathcal L}\otimes \omega_X^{-1})+1.
\end{equation}

Since $X$ has rational singularities, $g_*\omega_Y=\omega_X={\mathcal O}_X(-(a+b+c)A+E_1+\cdots+E_r)$.

\begin{Proposition}\label{Prop2} We have vanishing of cohomology $H^2(X,\mathcal O_X(\alpha A-\beta E))=0$
if $\beta\ge 0$ and $\alpha>-(a+b+c)$.
\end{Proposition}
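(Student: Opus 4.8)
The plan is to prove the vanishing $H^2(X,\mathcal{O}_X(\alpha A-\beta E))=0$ for $\beta\ge 0$ and $\alpha>-(a+b+c)$ by Serre duality on the minimal resolution $g:Y\to X$, exactly as set up in the paragraphs preceding the statement. Since $X$ has rational singularities and $g_*\omega_Y=\omega_X=\mathcal{O}_X(-(a+b+c)A+E_1+\cdots+E_r)$, we have $H^2(X,\mathcal{O}_X(\alpha A-\beta E))\cong H^2(Y,g^*\mathcal{O}_X(\alpha A-\beta E))$. By Serre duality on the smooth projective surface $Y$, this is dual to $H^0(Y,\omega_Y\otimes g^*\mathcal{O}_X(-\alpha A+\beta E))$. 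Pushing forward to $X$ (using that $g_*$ of this line bundle's sections is computed on $X$, and $g_*\omega_Y=\omega_X$ together with the projection formula since $-\alpha A+\beta E$ is pulled back), it suffices to show $H^0(X,\omega_X\otimes\mathcal{O}_X(-\alpha A+\beta E))=H^0(X,\mathcal{O}_X((-(a+b+c)-\alpha)A+(E_1+\cdots+E_r)+\beta E))=0$.

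Next I would argue that this $H^0$ vanishes by intersecting with $A$. Write $D=(-(a+b+c)-\alpha)A+(E_1+\cdots+E_r)+\beta E$. Using the intersection numbers in~(\ref{eq6}), namely $(A\cdot A)=\frac{1}{abc}>0$, $(A\cdot E_i)=0$ and $(A\cdot E)=0$, we get $(D\cdot A)=(-(a+b+c)-\alpha)\cdot\frac{1}{abc}$. Since $\alpha>-(a+b+c)$, the coefficient $-(a+b+c)-\alpha<0$, so $(D\cdot A)<0$. On the other hand, $A$ is the pullback of the ample (after clearing denominators) class $\mathcal{O}_\PP(1)$, hence $A$ is nef on $X$; therefore any effective divisor $D'$ satisfies $(D'\cdot A)\ge 0$. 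Consequently $D$ cannot be linearly (or even numerically) equivalent to an effective divisor, so $H^0(X,\mathcal{O}_X(D))=0$, which is what we wanted.

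The one point requiring a little care — and the place I expect to spend the most effort — is the interplay between Weil divisors, the intersection theory of~\cite{M}, and the passage through the resolution $g$: one must check that $H^0(X,\mathcal{O}_X(D))\ne 0$ genuinely forces $D$ to be (numerically equivalent to) an effective Weil divisor, and that nefness of $A$ (pulled back from the $\QQ$-Cartier ample $\mathcal{O}_\PP(1)$) gives $(D'\cdot A)\ge 0$ for all such effective $D'$ in the Mumford-style intersection pairing on the normal surface $X$. This is standard: a nonzero section of $\mathcal{O}_X(D)$ for a divisorial sheaf produces an effective Weil divisor in the class, and the intersection number with a nef class is nonnegative because it can be computed on $Y$ after pulling back, where $g^*A$ is nef and the pullback of an effective divisor is effective. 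Alternatively, and perhaps more cleanly for the write-up, I would do the entire Serre-duality computation on $Y$ from the start: reduce to showing $H^0(Y,\omega_Y\otimes g^*\mathcal{O}_X(-D_0))=0$ where $D_0=\alpha A-\beta E$, note this line bundle has negative degree against the nef class $g^*A$ by the same computation (using $(\omega_Y\cdot g^*A)=(\omega_X\cdot A)=-(a+b+c)\cdot\frac{1}{abc}$ and $(g^*A\cdot g^*A)=\frac{1}{abc}$, $(g^*A\cdot g^*E)=0$), and conclude.

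Finally, I would record that the hypothesis $\beta\ge 0$ is used only to guarantee that $\beta E$ contributes nonnegatively — in fact it contributes zero — to the intersection with $A$; what really drives the vanishing is the strict inequality $\alpha>-(a+b+c)$, which makes the $A$-degree of the relevant line bundle strictly negative. The argument is uniform in the characteristic of $K$ since it only uses Serre duality, nefness of a pullback of an ample class, and the rational-singularities identity $g_*\omega_Y=\omega_X$ for the weighted projective plane blown up at nonsingular points.
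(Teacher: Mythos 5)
Your overall strategy (duality plus negativity of the $A$-degree) is genuinely different from the paper's proof, but as written it has a gap at the very first reduction. The sheaf $\mathcal O_X(\alpha A-\beta E)$ is in general only a reflexive divisorial sheaf, not a line bundle: $A$ is a Weil divisor with $\mathcal O_X(A)\cong f^*\mathcal O_{\PP}(1)$, and $\alpha A$ fails to be Cartier at the singular points of $X$ lying over the $Q_i$ unless $\alpha$ is suitably divisible. Consequently ``$g^*\mathcal O_X(\alpha A-\beta E)$'' is not a well-defined line bundle pullback, and the rational-singularities identity $H^i(X,\mathcal L)\cong H^i(Y,g^*\mathcal L)$ that you invoke (and the projection-formula pushforward of $\omega_Y\otimes g^*\mathcal O_X(-\alpha A+\beta E)$) is only available for invertible $\mathcal L$. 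If you replace $g^*$ by Mumford's pullback and round down, the comparison you need is $H^2(X,\mathcal O_X(D_0))\cong H^2(Y,\mathcal O_Y(\lfloor g^*D_0\rfloor))$; but $g_*\mathcal O_Y(\lfloor g^*D_0\rfloor)=\mathcal O_X(D_0)$ only controls the $H^0$ level (this is exactly (\ref{eq4}), which the paper states for $H^0$ alone), and in the Leray spectral sequence the map $H^2(X,\mathcal O_X(D_0))\to H^2(Y,\mathcal O_Y(\lfloor g^*D_0\rfloor))$ can have kernel coming from $H^0(X,R^1g_*\mathcal O_Y(\lfloor g^*D_0\rfloor))$, and $\lfloor g^*D_0\rfloor$ need not be $g$-nef, so this $R^1g_*$ does not obviously vanish. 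So vanishing on $Y$ does not, without further argument, give the asserted vanishing on $X$.

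The argument is repairable without going to $Y$ at all: $X$ is a normal surface with rational, hence Cohen--Macaulay, singularities, so Grothendieck--Serre duality gives $H^2(X,\mathcal F)^*\cong{\rm Hom}_{\mathcal O_X}(\mathcal F,\omega_X)$ for any coherent $\mathcal F$, and for the reflexive sheaf $\mathcal F=\mathcal O_X(\alpha A-\beta E)$ one has $\mathcal Hom(\mathcal F,\omega_X)\cong\mathcal O_X(K_X-\alpha A+\beta E)$ with $\omega_X=\mathcal O_X(-(a+b+c)A+E_1+\cdots+E_r)$. Then your computation with (\ref{eq6}) applies verbatim on $X$: the class $K_X-\alpha A+\beta E$ has $A$-degree $\bigl(-(a+b+c)-\alpha\bigr)/abc<0$, while $A$ is nef (pullback of the ample $\QQ$-Cartier $\mathcal O_{\PP}(1)$), so no effective Weil divisor lies in that class and the $H^0$ vanishes. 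This repaired route is correct and, as you note, does not actually use $\beta\ge 0$. By contrast, the paper's own proof is more elementary and sidesteps duality and the singularities of $X$ entirely: it uses (\ref{eqLC3}) to identify $H^2(X,\mathcal O_X(\alpha A-\beta E))\cong H^2(\PP,\mathcal I^{\beta}(\alpha))$, then the exact sequence $0\to\mathcal I^{\beta}(\alpha)\to\mathcal O_{\PP}(\alpha)\to(\mathcal O_{\PP}/\mathcal I^{\beta})(\alpha)\to 0$ together with the zero-dimensional support of the quotient to reduce to $H^2(\PP,\mathcal O_{\PP}(\alpha))=0$ for $\alpha>-(a+b+c)$ (this is where $\beta\ge 0$ enters, via $\mathcal I^{\beta}$).
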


\begin{proof}
We have $H^2(X,\mathcal O_X(\alpha A-\beta E))\cong H^2(\PP,{\mathcal I}^{\beta}(\alpha))$. From the exact sequence
$$
0\rightarrow \mathcal I^{\beta}(\alpha)\rightarrow \mathcal O_{\PP}(\alpha)\rightarrow (\mathcal O_{\PP}/\mathcal I^{\beta})(\alpha)\rightarrow 0
$$ 
and the fact that $(\mathcal O_{\PP}/\mathcal I^{\beta})(\alpha)$ has zero dimensional support, we have that
$$
H^2(\PP,{\mathcal I}^{\beta}(\alpha))\cong H^2(\PP,\mathcal O_{\PP}(\alpha))=0
$$
for $\alpha>-(a+b+c)$.
\end{proof}

 From (\ref{eq6}), (\ref{eq1}) and Proposition \ref{Prop2} we see that 
$\sqrt{abcu}A-E\in NL$, since 
$$
((\sqrt{abcu}A-E)\cdot (\sqrt{abcu}A-E))=0.
$$
 Thus 
\begin{equation}\label{eq15}
0<\tau(I)\le \sqrt{abcu}\le s(I).
\end{equation}

Suppose that $D$ is a Weil divisor on $X$. $g^*(D)$ is defined in \cite{M}  as the $\QQ$-divisor on $Y$ which agrees with the
strict transform of $D$ away from the exceptional locus of $g$, and has intersection number 0 with
all exceptional curves. If $F=\sum \alpha_i E_i$ is a $\QQ$-divisor on $Y$ (with $\alpha_i\in
\QQ$), then we define a $\ZZ$-divisor by $\lfloor F\rfloor=\sum \lfloor \alpha_i\rfloor E_i$. If ${\mathcal L}$ is a line bundle on $X$, then as for
instance follows from the projection formula of Theorem 2.1 of \cite{S},
\begin{equation}\label{eq4}
H^0(X,{\mathcal O}_X(D)\otimes{\mathcal L})=H^0(Y,{\mathcal O}_Y(\lfloor g^*(D)\rfloor )\otimes g^*{\mathcal L}).
\end{equation}

\begin{Lemma}\label{Lemma1} Suppose ${\mathcal F}$ is a coherent sheaf on $X$ and  $\mathcal B$ is a
line bundle on $X$.
\begin{enumerate}
\item[1.] The Euler characteristic $\chi({\mathcal F}\otimes {\mathcal B}^n)$ is a polynomial in
$n$ for $n\in\NN$. \item[2.] If $(\mathcal B\cdot {\mathcal O}_{X}(A))>0$ then $H^2(X,{\mathcal
F}\otimes{\mathcal B}^n)=0$ for $n\gg0$.
\end{enumerate}
\end{Lemma}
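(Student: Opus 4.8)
The plan is to reduce the statement on the (possibly singular) surface $X$ to the minimal resolution $g:Y\rightarrow X$, where $Y$ is a smooth projective rational surface and all the classical tools (asymptotic Riemann--Roch, Serre duality, Fujita-type vanishing) are available. For part (1), I would first reduce to the case where $\mathcal{F}$ is a line bundle: resolving $\mathcal{F}$ by a finite complex of direct sums of line bundles (or using additivity of $\chi$ on short exact sequences and dévissage, filtering $\mathcal{F}$ by coherent subsheaves whose quotients are twists of ideal sheaves of points / line bundles), it suffices to check that $n\mapsto\chi(X,\mathcal{L}\otimes\mathcal{B}^n)$ is polynomial for $\mathcal{L}$ a line bundle. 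Since $X$ has rational singularities, $H^i(X,\mathcal{L}\otimes\mathcal{B}^n)=H^i(Y,g^*\mathcal{L}\otimes g^*\mathcal{B}^{\otimes n})$ for all $i$, so $\chi(X,\mathcal{L}\otimes\mathcal{B}^n)=\chi(Y,g^*\mathcal{L}\otimes (g^*\mathcal{B})^{\otimes n})$, and by the Riemann--Roch theorem on the smooth surface $Y$ (as in (\ref{eq1})) this equals
\[
\tfrac12\bigl((g^*\mathcal{B})^{\otimes n}\cdot (g^*\mathcal{B})^{\otimes n}\otimes g^*\mathcal{L}\otimes\omega_Y^{-1}\bigr)+\chi(\mathcal{O}_Y),
\]
which is visibly a quadratic polynomial in $n$. (Alternatively one can quote the general fact that Euler characteristics are polynomial in the twisting parameter for any coherent sheaf and any line bundle, but the explicit Riemann--Roch computation on $Y$ is cleaner here and parallels the paper's earlier use of (\ref{eq1}).)

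For part (2), the hypothesis $(\mathcal{B}\cdot\mathcal{O}_X(A))>0$ together with $(\mathcal{O}_X(A)\cdot\mathcal{O}_X(A))=\frac{1}{abc}>0$ from (\ref{eq6}) says that $\mathcal{B}$ pairs strictly positively with the nef and big class $\overline{A}$; hence $(\mathcal{B}\cdot\mathcal{B}^n\otimes\mathcal{B})>0$ and in fact $g^*\mathcal{B}$ has positive intersection with the nef and big pullback $g^*\mathcal{O}_X(A)$, so $g^*\mathcal{B}$ is big. Again passing to $Y$, $H^2(X,\mathcal{F}\otimes\mathcal{B}^n)=H^2(Y,g^*\mathcal{F}\otimes(g^*\mathcal{B})^{\otimes n})$. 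By Serre duality on the smooth surface $Y$ this is dual to $\operatorname{Hom}_{\mathcal{O}_Y}(g^*\mathcal{F}\otimes(g^*\mathcal{B})^{\otimes n},\omega_Y)$, equivalently $H^0$ of a sheaf which, for $n\gg0$, is forced to vanish because $g^*\mathcal{B}$ has strictly positive degree against the fixed ample class used to bound everything: concretely, fix an ample line bundle $\mathcal{H}$ on $Y$; then $(g^*\mathcal{B}\cdot\mathcal{H})>0$ (as $g^*\mathcal{B}$ is big and we may choose $\mathcal{H}$ appropriately, or simply note $g^*\mathcal{O}_X(\ell A)$ is ample for $\ell=\operatorname{lcm}(a,b,c)$ by a suitable choice, and $(\mathcal{B}\cdot\mathcal{O}_X(\ell A))=\ell(\mathcal{B}\cdot\mathcal{O}_X(A))>0$), so the $\mathcal{H}$-degree of $g^*\mathcal{F}\otimes(g^*\mathcal{B})^{\otimes n}$ at each generic point of its support grows linearly in $n$, while the dualizing sheaf $\omega_Y$ is fixed; for $n$ large there can be no nonzero map from a sheaf of arbitrarily large $\mathcal{H}$-degree (on every component of its support of positive dimension) into the fixed $\omega_Y$, and the zero-dimensional part contributes nothing to $H^2$. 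This gives $H^2(Y,g^*\mathcal{F}\otimes(g^*\mathcal{B})^{\otimes n})=0$ for $n\gg0$, hence the claim.

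The main obstacle is the argument in part (2): making precise the vanishing of $\operatorname{Hom}(g^*\mathcal{F}\otimes(g^*\mathcal{B})^{\otimes n},\omega_Y)$ for large $n$ when $g^*\mathcal{B}$ is only big (not necessarily nef), and when $\mathcal{F}$ is an arbitrary coherent sheaf whose support may have lower-dimensional components. The cleanest route is to first replace $\mathcal{F}$ by a line bundle via dévissage as in part (1) (the torsion and zero-dimensional pieces never contribute to $H^2$ on a surface, and twists of ideal sheaves of points sit in exact sequences with line bundles and skyscrapers), thereby reducing to showing $H^2(Y,\mathcal{M}\otimes(g^*\mathcal{B})^{\otimes n})=0$ for a line bundle $\mathcal{M}$ and $n\gg0$; then Serre duality turns this into the statement that $H^0(Y,\mathcal{M}^{-1}\otimes\omega_Y\otimes(g^*\mathcal{B})^{\otimes(-n)})=0$ for $n\gg0$, which holds because a line bundle whose $\mathcal{H}$-degree tends to $-\infty$ has no global sections. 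I would carry out: (i) dévissage of $\mathcal{F}$ to line bundles; (ii) transfer to $Y$ using rational singularities; (iii) asymptotic Riemann--Roch on $Y$ for part (1); (iv) Serre duality plus the degree estimate $(g^*\mathcal{B}\cdot\mathcal{H})>0$ for part (2).
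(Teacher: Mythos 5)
The step that would fail is your reduction of an arbitrary coherent sheaf to line bundles, which both parts of your argument lean on. On the singular surface $X$ this d\'evissage is not available as stated: a torsion sheaf supported on a curve cannot be filtered with quotients that are line bundles or twisted ideal sheaves of points, and since $X$ has quotient singularities a rank-one torsion-free sheaf is in general only a Weil divisorial sheaf $\mathcal O_X(D)$, not a line bundle (nor do finite locally free resolutions exist). Relatedly, your identity $H^2(X,\mathcal F\otimes\mathcal B^n)=H^2(Y,g^*\mathcal F\otimes (g^*\mathcal B)^{\otimes n})$ is false for general coherent $\mathcal F$: rational singularities give this only for locally free sheaves, via $g_*\mathcal O_Y=\mathcal O_X$, $R^ig_*\mathcal O_Y=0$ and the projection formula. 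The paper's d\'evissage is the missing ingredient: one filters $\mathcal F$ with quotients $\mathcal O_{Z_i}\otimes\mathcal M^{e_i}$, where $\mathcal M$ is ample and the $Z_i$ are integral subvarieties (\cite{H}, Ch.~I, \S 7). Quotients with $\dim Z_i\le 1$ have vanishing $H^2$ for trivial dimension reasons and polynomial Euler characteristic by Riemann--Roch on the normalized curve, so only the quotients with $Z_i=X$, which really are line bundles, need an argument. For part~1 your fallback of quoting the general polynomiality of $\chi(\mathcal F\otimes\mathcal B^n)$ (Snapper's lemma) is perfectly adequate, so part~1 is fine; it is part~2 where the gap matters.

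Your positivity claims also need repair, though they are not what the argument really uses. From $(\mathcal B\cdot\mathcal O_X(A))>0$ you cannot conclude that $g^*\mathcal B$ is big (take $\mathcal B=\mathcal O_X(A-NE)$ with $N\gg 0$: it meets $A$ positively but no multiple is effective), and $\mathcal O_X(\ell A)$, hence $g^*\mathcal O_X(\ell A)$, is not ample: $A=f^*H$ has degree zero on each exceptional curve $E_i$, so it is only nef and big ($\mathcal O_{\PP}(\ell)$ is ample on $\PP$, not its pullback to $X$ or $Y$). Fortunately the Serre duality mechanism survives without any ample class: once reduced (by the correct d\'evissage) to a line bundle $\mathcal L_n=\mathcal M^{e_i}\otimes\mathcal B^n$ on $X$, rational singularities and Serre duality on the smooth $Y$ give $h^2(X,\mathcal L_n)=h^0\bigl(Y,\omega_Y\otimes g^*\mathcal L_n^{-1}\bigr)$, and since $g^*\mathcal O_X(A)$ is nef with $\bigl((\omega_Y\otimes g^*\mathcal L_n^{-1})\cdot g^*A\bigr)\to-\infty$ as $n\to\infty$ (this is exactly where $(\mathcal B\cdot\mathcal O_X(A))>0$ enters), an effective divisor cannot have negative degree against a nef class, so these $H^0$ vanish for $n\gg 0$. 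This yields a correct, if slightly different, proof of part~2; the paper itself avoids Serre duality entirely, disposing of the $Z_i=X$ pieces by citing Proposition~\ref{Prop2}, which was proved by pushing down to $\PP$ and using $H^2(\PP,\mathcal O_{\PP}(\alpha))=0$ for $\alpha>-(a+b+c)$.
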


\begin{proof}  We first prove 1. Let ${\mathcal M}$ be an ample line bundle on the projective surface $X$.
Thus there is a composition series of $\mathcal F$ by ${\mathcal O}_X$ modules ${\mathcal
O}_{Z_i}\otimes{\mathcal M}^{e_i}$ with $1\le i\le m$, where $m$ is a positive integer, $Z_i$ are
(integral) subvarieties of $X$ and  $e_i\in\ZZ$ (c.f. Section 7 of Chapter 1 of \cite{H}). Thus
\begin{equation}\label{eq2}
\chi({\mathcal F}\otimes {\mathcal B}^n) =\sum_{i=1}^m\chi({\mathcal O}_{Z_i}\otimes{\mathcal
M}^{e_i}\otimes{\mathcal B}^n). \end{equation}

If $Z_i=X$ we have that $$\chi({\mathcal O}_{Z_i}\otimes{\mathcal M}^{e_i}\otimes{\mathcal B}^n)$$
is polynomial in $n$ by the Riemann Roch formula (\ref{eq1}). If $Z_i$ is an (integral) curve, then
we have an exact sequence
$$
0\rightarrow {\mathcal O}_{Z_i}\rightarrow {\mathcal O}_{\overline Z_i}\rightarrow {\mathcal
G}_i\rightarrow 0
$$
of ${\mathcal O}_{Z_i}$ modules, where $\overline Z_i$ is the normalization of $Z_i$. Since
${\mathcal G}_i$ has finite support, $\chi({\mathcal O}_{Z_i}\otimes{\mathcal
M}^{e_i}\otimes{\mathcal B}^n)$ is a polynomial in $n$ by the Riemann Roch theorem on the
nonsingular projective curve $\overline Z_i$. In the case when $Z_i$ is a point, $\chi({\mathcal
O}_{Z_i}\otimes{\mathcal M}^{e_i}\otimes{\mathcal B}^n)=\chi({\mathcal O}_{Z_i})=1$ for all $n$.

Now we prove 2. By the consideration of the composition sequence constructed in the first part of
the proof, we are reduced to showing that  for $1\le i\le m$, $H^2(Z_i,{\mathcal
O}_{Z_i}\otimes{\mathcal M}^{e_i}\otimes{\mathcal B}^{\otimes n})=0$ for $n\gg 0$. If $Z_i=X$ this
follows from Proposition \ref{Prop2}. Otherwise, $Z_i$ has dimension smaller than 2, so the
vanishing must hold.
\end{proof}

\section{Regularity of Symbolic Powers}

We continue with the assumptions of Section \ref{sec3}.

\begin{Proposition}\label{Prop1} There exist positive integers $b_0$ and $t_0$ such that if $D$ is a Weil divisor on $X$ such
that $\overline D$ is in the translation of ${\rm AL}$ by $t_0(b_0abcA-E)+abcA$, then
$H^i(X,D)=0$ for $i>0$.
\end{Proposition}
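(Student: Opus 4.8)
The goal is a vanishing statement for $H^i(X,\mathcal O_X(D))$, $i>0$, valid for all $D$ whose class lies in a suitable translate of the ample cone slice $\mathrm{AL}$. The natural strategy is to reduce to Kawamata--Viehweg-type vanishing on the minimal resolution $g:Y\to X$, using the round-down formula \eqref{eq4}. First I would pull $D$ back: by \eqref{eq4}, $H^i(X,\mathcal O_X(D))=H^i(Y,\mathcal O_Y(\lfloor g^*D\rfloor))$ for $i=0$, and since $X$ has only rational singularities one gets the same comparison for all $i\ge 0$ (the higher direct images $R^jg_*$ of the relevant sheaf vanish). So it suffices to prove $H^i(Y,\mathcal O_Y(\lfloor g^*D\rfloor))=0$ for $i>0$. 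Write $\lfloor g^*D\rfloor = g^*D - \{g^*D\}$ where $\{g^*D\}=\sum\{\alpha_i\}E_i'$ is an effective, $g$-exceptional $\QQ$-divisor supported on the resolution of the three singular points $Q_1,Q_2,Q_3$, with coefficients in $[0,1)$. Then $\lfloor g^*D\rfloor - K_Y = g^*(D) - \{g^*D\} - K_Y$, and since $K_Y = g^*K_X + (\text{effective exceptional})$ with $K_X = \mathcal O_X(-(a+b+c)A + E_1+\cdots+E_r)$, we can arrange $\lfloor g^*D\rfloor - K_Y \equiv g^*(D - K_X) - \Delta$ for an effective $\QQ$-divisor $\Delta$ with simple normal crossing support and coefficients $<1$ (the exceptional fibers over rational surface singularities are SNC trees of smooth rational curves). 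Kawamata--Viehweg vanishing then applies provided $g^*(D-K_X)-\Delta$ is nef and big; big is automatic once it is nef with positive self-intersection, and nef follows once $D-K_X$ minus a bounded correction is ample on $X$ and we pull back.

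The translation in the statement is engineered precisely so that $D - K_X$ absorbs the correction. Concretely, if $\overline D$ lies in $K_X$-shifted translate $t_0(b_0abcA - E) + abcA + \mathrm{AL}$, then $\overline{D - K_X} = \overline D + (a+b+c)\overline A - \overline{E_1+\cdots+E_r}$, and I would choose $b_0, t_0$ large enough (depending only on $a,b,c$ and the $e_i$, via the finitely many bounded quantities: the denominators $\ell=\mathrm{lcm}(a,b,c)$, the coefficients of $\{g^*D\}$ which lie in a finite set, and the difference $E - \sum E_i$) so that $\overline{D-K_X} - \overline{\Delta\text{-correction}}$ still lies in the interior of the ample cone, i.e. in $\mathrm{AMP}(X)$ strictly. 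Note $b_0abcA - E$ and $abcA$ are chosen as integral multiples that keep us on the $\mathbb Z$-divisor lattice while pointing into $\mathrm{AL}$; the factor $abc$ clears the denominator of $(A\cdot A)=1/abc$. Since $A\cdot A>0$, $A\cdot E_i=0$, and any class in the interior of $\mathrm{AL}$ pairs positively with all curves and has positive self-intersection, $g^*$ of such a class is nef and big on $Y$, and the same holds after subtracting the small SNC correction.

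The main obstacle, I expect, is bookkeeping the round-down term $\{g^*D\}$ uniformly: a priori $\lfloor g^*D\rfloor$ depends on $D$ in a subtle way through the fractional parts $\{\alpha_i\}$, and one must check (i) that these fractional parts take only finitely many values as $D$ ranges over the lattice (true, since $g^*$ of a Weil divisor has coefficients in $\tfrac1N\ZZ$ for $N$ depending only on the orders of the cyclic quotient singularities of $\PP(a,b,c)$, hence bounded by $a$, $b$, $c$), and (ii) that after rounding, the residual class $\lfloor g^*D\rfloor - K_Y$ is nef and big — which requires that the "loss" from rounding, bounded independently of $D$, is compensated by pushing $\overline D$ far enough into $\mathrm{AL}$, hence the need for the explicit shift by $t_0(b_0abcA-E)+abcA$. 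Once these uniform bounds are in place, Kawamata--Viehweg vanishing on the smooth rational surface $Y$ (valid in all characteristics here because $Y$ is a rational surface, or in characteristic zero directly) finishes the argument, and I would close by translating the vanishing back down to $X$ via the rational-singularities comparison.
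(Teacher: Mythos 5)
Your approach has a genuine gap: it hinges on Kawamata--Viehweg vanishing on the resolution $Y$, but Proposition \ref{Prop1} is stated and used (in Theorem \ref{Theorem4}) over an arbitrary algebraically closed field $K$; no characteristic hypothesis is available at this point of the paper. Your parenthetical justification that Kawamata--Viehweg vanishing is ``valid in all characteristics here because $Y$ is a rational surface'' is not correct: there are explicit counterexamples to Kawamata--Viehweg vanishing on smooth rational surfaces over fields of small positive characteristic, and even the liftability/Deligne--Illusie route only covers Kodaira-type statements for ample divisors under dimension-versus-characteristic constraints, not the nef-and-big statement with fractional boundary that your round-down bookkeeping requires. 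A secondary, fixable issue is your comparison $H^i(X,\mathcal O_X(D))=H^i(Y,\mathcal O_Y(\lfloor g^*D\rfloor))$ for $i>0$: the projection formula (\ref{eq4}) quoted from Sakai gives only $H^0$, and the vanishing of $R^1g_*\mathcal O_Y(\lfloor g^*D\rfloor)$ is not an immediate consequence of rationality of the singularities (the minimal resolution of a non--du Val cyclic quotient point has $K_Y$ relatively nef, so the naive relative vanishing argument points the wrong way); this needs a separate argument. Your cone bookkeeping (ample $+$ nef $=$ ample, finitely many fractional parts, absorbing $K_X$ and the difference $E-\sum E_i$ by taking $t_0$ large) is sound, so in characteristic zero your argument is essentially a Sakai-type vanishing theorem and could be completed; it just does not prove the proposition as stated.

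The paper avoids all of this by staying on $X$ and invoking Fujita's vanishing theorem, which holds over any field: choose $b_0$ with $b_0abcA-E$ ample, set $\mathcal M=\mathcal O_X(b_0abcA-E)$, and apply Fujita to the finitely many sheaves $\mathcal F_m=\mathcal O_X(mA)$, $0\le m<abc$, to get a single $t_0$ with $H^i(X,\mathcal F_m\otimes\mathcal L\otimes\mathcal M^{t_0})=0$ for every nef line bundle $\mathcal L$ and $i>0$. Any $D$ in the translated cone is then written as $\mathcal F_m\otimes\mathcal L\otimes\mathcal M^{t_0}$ with $\mathcal L=\mathcal O_X((n+1)abcA-\beta E)$ nef and Cartier (the extra $abcA$ in the translation exists precisely to allow rounding $\alpha$ up to a multiple of $abc$). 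The uniformity over all nef twists, which you are trying to extract from Kawamata--Viehweg plus bounded corrections, is exactly what Fujita's theorem provides for free, in every characteristic.
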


\begin{proof} Let ${\mathcal F}_m={\mathcal O}_{X}(mA)$ for $0\le m<abc$. There exists a positive integer $b_0$ such that
$b_0abcA-E$ is ample on $X$. Let ${\mathcal M}={\mathcal O}_X(b_0abc A-E)$.

We use the following vanishing theorem, proven in  Theorem 5.1 of \cite{F1}. Let ${\mathcal F}$ be coherent on a projective scheme $Y$, and ${\mathcal M}$ be an
ample line bundle. Then there exists an integer $t$ such that $H^i(Y,{\mathcal F}\otimes{\mathcal
L}\otimes {\mathcal M}^t)=0$ for all nef line bundles ${\mathcal L}$ on $Y$ and for all $i>0$.

Choose $t_0$ so that $t_0$ satisfies the condition for $t$ in the above vanishing theorem, for
${\mathcal F}_m$ with $0\le m<abc$ and for all $i>0$.

Suppose $\overline D$ is in the translation of ${\rm AL}$ by $t_0(b_0abcA-E)+abcA$. Then
$D\sim \alpha A-\beta E+t_0(b_0abcA-E)+abcA$, with $\alpha\ge s\beta$. Expand $\alpha=nabc+m$
with $0\le m<abc$. Then we have
$$
{\mathcal O}_X(D)\cong {\mathcal F}_m\otimes{\mathcal L}\otimes {\mathcal M}^{t_0}
$$
where ${\mathcal L}={\mathcal O}_X((n+1)abcA-\beta E)$ is nef. Thus the conclusions of the
Proposition hold for $D$.
\end{proof}

Suppose that $D$ is a divisor on $X$. Define
$$
\overline D^{\perp}=\{\phi\in N_1(X)\mid (D\cdot\phi)=0\}.
$$
An effective divisor $D$ such that $(D\cdot D)<0$ will be called a negative curve.
An effective divisor $D$ such that $D\sim aA-mE$ for some positive integers $a$ and $m$ will be called an $E$-uniform curve.

\begin{Lemma}\label{Lemma2} Either $T=(\sqrt{abcu}A-E)\RR_{\ge 0}$, or
$T=\overline C^{\perp}\cap AL$, where $C$ is an irreducible negative curve.
\end{Lemma}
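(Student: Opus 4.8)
The plan is to analyze the geometry of the cone $AL \subseteq L$ together with its boundary ray $T = (sA - E)\RR_{\ge 0}$, and to split into two cases according to whether $s = s(I)$ equals $\sqrt{abcu}$ or is strictly larger. By \eqref{eq15} we always have $\sqrt{abcu} \le s$, so these are the only two possibilities. If $s = \sqrt{abcu}$, then $T = (\sqrt{abcu}A - E)\RR_{\ge 0}$ and we are in the first alternative; so assume $s > \sqrt{abcu}$. I would first record that in this case the ray $T$ lies strictly inside the half-plane $\{(D \cdot D) > 0\}$ boundary in a controlled way: the element $sA - E$ satisfies $((sA-E)\cdot(sA-E)) = \frac{s^2}{abc} - u > 0$, and moreover, being on the boundary of the \emph{closed} ample cone $\overline{\mathrm{AMP}}(X)$, it is nef but not ample, hence $(sA - E)^{\perp}$ meets $\overline{\mathrm{NE}}(X)$ nontrivially.

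The key step is then to produce the irreducible negative curve $C$. Because $sA - E$ is nef but not ample, by the Kleiman criterion there exists a nonzero class in $\overline{\mathrm{NE}}(X)$ on which $sA - E$ vanishes; I would argue that this forces an \emph{effective} curve class, and then, by decomposing into irreducible components and using that $(sA-E)$ is nef (so it is nonnegative on every component, and the sum is zero), that $(sA - E) \cdot C = 0$ for some irreducible curve $C$. The claim is that such a $C$ must be a negative curve: since $sA - E$ has positive self-intersection and is nef, the Hodge index theorem on (a resolution of) $X$ shows that its orthogonal complement in $N_1(X)$ is negative definite, hence $(C \cdot C) < 0$. Finally one checks $C \notin L$ — equivalently $\overline C$ is not proportional to $sA - E$ — because $(C\cdot(sA-E)) = 0$ while $((sA-E)\cdot(sA-E)) > 0$; thus $\overline C^{\perp} \cap L$ is a genuine line, and since $sA - E \in \overline C^{\perp}$ and $sA - E$ spans $T$, we get $T \subseteq \overline C^{\perp} \cap AL$. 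The reverse inclusion $\overline C^{\perp} \cap AL \subseteq T$ follows because $\overline C^{\perp} \cap L$ is one-dimensional (as $C \notin L^{\perp}$, the functional $(C \cdot -)$ is not identically zero on the plane $L$) and $AL$ is a two-dimensional cone whose only boundary ray meeting $\overline C^\perp$ is $T$ — the other boundary ray $A\RR_{\ge 0}$ satisfies $(C \cdot A) > 0$ since $C$ is an effective curve not contracted by $f$ and $A = f^*\mathcal O_\PP(1)$ with $\mathcal O_\PP(1)$ nef, in fact $(C\cdot A)>0$ unless $C$ is an exceptional $E_i$, a case handled separately.

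I expect the main obstacle to be the passage from "$sA - E$ is nef but not ample" to "there is an \emph{irreducible} negative curve $C$ with $(C \cdot (sA-E)) = 0$." The subtleties are: (i) $X$ is only normal, not smooth, so intersection theory and Hodge index must be invoked on the minimal resolution $g : Y \to X$ as set up in Section \ref{sec3}, pulling back $sA - E$ and being careful that $g^*(sA-E)$ remains nef with positive self-intersection (this is exactly \eqref{eq1} and the projection formula); (ii) one must ensure the class on which $sA-E$ vanishes is represented by an effective divisor rather than just a limit of effective classes — here I would use that $\overline{\mathrm{NE}}(X)$ is the closure of the effective cone and that a nef class with positive self-intersection is "big," so its orthogonal wall of the nef cone is supported by genuine curves (an application of the rationality/cone theorem philosophy, or more elementarily Riemann–Roch plus the vanishing $H^2 = 0$ from Proposition \ref{Prop2} to show $h^0$ grows, producing effective divisors, whose components accumulate on the wall). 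Once an effective $D$ with $(D\cdot(sA-E))=0$ is in hand, the decomposition into irreducible components and Hodge index finish the argument quickly.
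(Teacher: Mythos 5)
Your overall architecture matches the paper's: the case $s=\sqrt{abcu}$ is immediate, and when $s>\sqrt{abcu}$ everything reduces to producing an irreducible curve $C$ with $(C\cdot(sA-E))=0$; the Hodge index theorem then gives $(C\cdot C)<0$ because $((sA-E)^2)=\frac{s^2}{abc}-u>0$, and ruling out $C=E_i$ (note $(E_i\cdot(sA-E))=e_i>0$) shows $(C\cdot A)>0$, so $\overline C^{\perp}\cap L$ is the line spanned by $sA-E$ and $\overline C^{\perp}\cap AL=T$. Those parts of your argument are fine, and in fact more explicit than the paper, which leaves them to the reader.

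The genuine gap is the step you yourself flag as the main obstacle: passing from ``$sA-E$ is nef but not ample'' to the existence of an \emph{irreducible curve} orthogonal to it. Kleiman's criterion only produces a nonzero class $z\in\overline{{\rm NE}}(X)$ with $((sA-E)\cdot z)=0$, and such a limit class need not be effective, so ``this forces an effective curve class'' is not yet an argument. Your first remedy (``the orthogonal wall of the nef cone is supported by genuine curves'') is really the Nakai--Moishezon criterion for $\RR$-divisors (Campana--Peternell, see \cite{L}); citing that theorem explicitly (applied via Mumford's intersection theory on $X$, or after pulling back to the minimal resolution) would indeed close the proof, but it is not a consequence of the cone theorem and must be invoked as such. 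Your second, ``more elementary'' remedy is the paper's actual route, but your sketch omits its concrete mechanism: choose a rational $\alpha=c/d$ with $\sqrt{abcu}<\alpha<s$; then $((cA-dE)^2)>0$, so Riemann--Roch (\ref{eq1}) together with the $H^2$-vanishing of Proposition \ref{Prop2} gives $h^0(X,\mathcal O_X(m(cA-dE)))>0$ for $m\gg0$, hence only finitely many irreducible curves $C_1,\dots,C_t$ (components of this effective divisor) satisfy $(C_i\cdot(\alpha A-E))<0$. If every irreducible curve were strictly positive against $sA-E$, one could pick $\beta$ with $\alpha<\beta<s$ and $(C_i\cdot(\beta A-E))>0$ for all $i$, while any other curve $C$ has $(C\cdot(\alpha A-E))\ge0$ and $(C\cdot A)\ge0$, hence $(C\cdot(\beta A-E))\ge0$; then $\beta A-E\in AL$ with $\beta<s$, contradicting that $T$ is the boundary ray. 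This finiteness-plus-perturbation argument (not an ``accumulation'' statement) is the missing content; with it, or with an explicit appeal to Campana--Peternell, your proof is complete.
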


\begin{proof} Recall from (\ref{eq15}), that $T=(s(I) A-E)\RR_{\ge 0}$, with $s=s(I)\ge \sqrt{abcu}$.
Suppose that $s>\sqrt{abcu}$. There exists $\alpha\in\QQ$ such that $s>\alpha>\sqrt{abcu}$. Write $\alpha=\frac{c}{d}$ where $c,d\in\ZZ_+$. 
By (\ref{eq6}), (\ref{eq1}) and Proposition \ref{Prop2}, we have that 
$h^0(X,{\mathcal O}_X(m(cA-dE)))>0$ for $m\gg 0$. Thus there exist only a finite number of irreducible curves $C_1,\ldots, C_t$ on $X$ such that $(C_i\cdot(\alpha A-E))<0$.

Suppose that $(C\cdot(s A-E))>0$ for all irreducible curves $C$ on $X$. In particular, $(C_i\cdot (s A-E))>0$ for all $1\le i\le t$. This implies that there exists a real number $\beta$ with $\alpha<\beta<s$ such that
$(C_i\cdot(\beta A-E))>0$ for $1\le i\le t$. If $C$ is an irreducible curve on $X$ other than one of the $C_i$, then we have $(C\cdot(\alpha A-E))\ge 0$ and $(C\cdot A)\ge 0$, so that $(C\cdot(\beta A-E))\ge 0$. Thus $\beta A-E\in AL$,
a contradiction. Thus there exists an irreducible curve $C$ on $X$ such that 
$(C\cdot (s A-E))=0$ (the only irreducible curves $C$ on $X$ with $(C\cdot A)=0$ are the $E_i$).

\end{proof}

\begin{Theorem}\label{Theorem3} Let $u=\sum_{i=1}^re_i^2$. We have that $s(I)\ge \sqrt{abcu}$. If 
$s(I)>\sqrt{abcu}$, then $s(I)$ is a rational number. 
\end{Theorem}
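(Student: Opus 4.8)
The statement follows almost immediately from the structural results already established, so the plan is to assemble the pieces. First I would recall from~(\ref{eq15}) that $\tau(I)\le\sqrt{abcu}\le s(I)$, which already gives the inequality $s(I)\ge\sqrt{abcu}$; this needs no further argument. So the only substantive claim is the rationality of $s(I)$ when $s(I)>\sqrt{abcu}$.

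For that, I would invoke Lemma~\ref{Lemma2}. Since by hypothesis $s(I)>\sqrt{abcu}$, the first alternative in that lemma ($T=(\sqrt{abcu}A-E)\RR_{\ge 0}$) cannot hold, because the boundary ray $T$ of $AL$ is by definition $(s(I)A-E)\RR_{\ge 0}$ and $s(I)\ne\sqrt{abcu}$. Hence the second alternative must occur: there is an irreducible negative curve $C$ on $X$ with $T=\overline C^{\perp}\cap AL$, i.e.\ $(C\cdot(s(I)A-E))=0$.

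Now I would extract rationality from this orthogonality relation. Write $\overline C\equiv pA+qE$ in $L$ modulo $\overline C^{\perp}$-irrelevant components — more concretely, since $(C\cdot A)$ and $(C\cdot E_i)$ are rational numbers (the intersection pairing on $\mathrm{Div}(X)$ takes values in $\QQ$ by~\cite{M}), the two numbers $(C\cdot A)$ and $(C\cdot E)$ are rational, and the condition $(C\cdot(s(I)A-E))=0$ reads $s(I)\,(C\cdot A)=(C\cdot E)$. Here $(C\cdot A)>0$: indeed $C$ is an irreducible curve not equal to any $E_i$ (the $E_i$ have self-intersection $-1$ but $(E_i\cdot A)=0$, so $E_i^{\perp}\cap AL$ would be strictly larger than a ray, or one checks directly that $T$ meets the interior of $AL$ away from the $E_i$-walls), so $f(C)$ is a curve in $\PP$ and $(C\cdot A)=(C\cdot f^*\mathcal O_{\PP}(1))=(f(C)\cdot\mathcal O_{\PP}(1))>0$ since $\mathcal O_{\PP}(1)$ is ample (as a $\QQ$-divisor). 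Therefore
$$
s(I)=\frac{(C\cdot E)}{(C\cdot A)}\in\QQ.
$$

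The main obstacle is simply making sure the negative curve $C$ produced by Lemma~\ref{Lemma2} genuinely satisfies $(C\cdot A)>0$, so that the quotient above is defined; this is where one must rule out $C$ being one of the exceptional curves $E_i$, which is handled by the parenthetical remark at the end of the proof of Lemma~\ref{Lemma2} (the only irreducible curves $C$ with $(C\cdot A)=0$ are the $E_i$, and those satisfy $(E_i\cdot(s(I)A-E))=e_i>0$, not $0$). Everything else is bookkeeping with the rational-valued intersection pairing.
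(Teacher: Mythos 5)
Your proposal is correct and is essentially the paper's own argument: the paper's proof of Theorem~\ref{Theorem3} simply cites (\ref{eq15}) for the inequality and Lemma~\ref{Lemma2} for the rationality, and your write-up just makes explicit the step the paper leaves implicit, namely that $T=\overline C^{\perp}\cap AL$ forces $s(I)=(C\cdot E)/(C\cdot A)\in\QQ$ with $(C\cdot A)>0$ because $C$ is none of the $E_i$. No issues.
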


\begin{proof} The proof is immediate from (\ref{eq15}) and Lemma \ref{Lemma2}.
\end{proof}

Negative curves and $E$-uniform curves are defined before Lemma \ref{Lemma2}.

\begin{Lemma}\label{Lemma3} Suppose that there exists an $E$-uniform negative curve $F$. Then $s(I)>\sqrt{abcu}$, and $T=\overline C^{\perp}\cap AL$, where $C$ is an irreducible negative curve in the support of $F$.
\end{Lemma}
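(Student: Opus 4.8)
\textbf{Proof proposal for Lemma \ref{Lemma3}.}

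The plan is to derive the strict inequality $s(I) > \sqrt{abcu}$ from the mere existence of the $E$-uniform negative curve $F$, and then invoke Lemma \ref{Lemma2}. Write $F \sim aA - mE$ with $a, m \in \ZZ_+$ and $(F \cdot F) < 0$; by (\ref{eq6}) this says $\frac{a^2}{abc} - m^2 u < 0$, i.e. $a/m < \sqrt{abcu}$. I want to conclude that $\tau(I) \le a/m < \sqrt{abcu} \le s(I)$ is \emph{not} what I need (that only gives $\tau < s$); instead I must rule out $s(I) = \sqrt{abcu}$ directly. So the first step is to suppose for contradiction that $s(I) = \sqrt{abcu}$, so that $T = (\sqrt{abcu}\,A - E)\RR_{\ge 0}$ is the boundary ray of $AL$, and every nef class in $L$ pairs nonnegatively with every curve.

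The key observation in the second step: since $F$ is effective and $E$-uniform, for any class $\phi = \alpha A - \beta E \in AL$ (so $\phi$ is a limit of ample classes, hence nef) we have $(F \cdot \phi) \ge 0$, that is $\frac{a\alpha}{abc} - m\beta u \ge 0$, i.e. $\alpha/\beta \ge mbcu/a \cdot$ (let me not grind the constant) $= m u b c / a$. Comparing with the defining slope $s(I)$ of the ray $T$: the extreme nef class $sA - E$ must satisfy $s \ge (\text{that bound})$. But by Cauchy--Schwarz / the computation $(F\cdot F)<0$, one checks $mbcu/a > \sqrt{abcu}$ precisely because $a/m < \sqrt{abcu}$. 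Hence $s(I) \ge mbcu/a > \sqrt{abcu}$, contradicting $s(I) = \sqrt{abcu}$. Therefore $s(I) > \sqrt{abcu}$.

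Once $s(I) > \sqrt{abcu}$ is established, Lemma \ref{Lemma2} immediately gives $T = \overline{C}^{\perp} \cap AL$ for some irreducible negative curve $C$. It remains to show $C$ can be taken in the support of $F$. For this, write $F = \sum n_j C_j$ as a sum of its irreducible components with $n_j > 0$. Since $sA - E \in AL$ is the boundary of the ample cone in $L$ and $(F \cdot (sA - E)) = \frac{as}{abc} - mu$, a short computation using $s > a/m$-versus-$\sqrt{abcu}$ bookkeeping shows $(F \cdot (sA-E)) \le 0$; as $(C_j \cdot (sA-E)) \ge 0$ for every component with $(C_j \cdot A) > 0$ and $=0$ for $C_j = E_i$, the only way the weighted sum can fail to be strictly positive is if some component $C_{j_0}$ with $(C_{j_0}\cdot A) > 0$ satisfies $(C_{j_0} \cdot (sA-E)) = 0$; that $C_{j_0}$ lies on the ray-orthogonal hyperplane $\overline{C}^\perp$ defining $T$, and being a component of the negative curve $F$ with positive $A$-degree it is itself negative (same argument as the parenthetical remark at the end of the proof of Lemma \ref{Lemma2}). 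Taking $C = C_{j_0}$ finishes the proof.

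\textbf{Main obstacle.} The delicate point is the sign bookkeeping that converts ``$F$ is $E$-uniform and negative'' into the \emph{strict} separation $s(I) > \sqrt{abcu}$ rather than merely $\tau(I) < \sqrt{abcu}$: one must be careful that an effective $E$-uniform $F$ really does pair nonnegatively with all of $AL$ (it does, since $AL \subset \overline{\mathrm{AMP}}(X)$ consists of nef classes and $F$ is effective) and that the resulting numerical inequality on slopes is strict, which is exactly where $(F\cdot F)<0$ — as opposed to $\le 0$ — is used. Pinning down that the orthogonal component $C_{j_0}$ is \emph{irreducible} and \emph{negative} (not just orthogonal) is the second place to be careful, but it follows the template already used at the close of Lemma \ref{Lemma2}.
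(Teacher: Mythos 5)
Your first step is fine and is essentially the paper's own argument: since $sA-E$ is nef and $F$ is effective with $(F\cdot F)<0$, the pairing $(F\cdot(sA-E))\ge 0$ forces $s$ strictly above $\sqrt{abcu}$ (the paper pairs $sA-E$ with the pseudoeffective boundary class $\tau A-E$ instead of with $F$ directly, but it is the same idea). The contradiction framing is superfluous, and your constant ``$mbcu/a$'' is garbled because you use $a$ both as a weight and as the $A$-coefficient of $F$; still, the inequality you need is correct.

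The second step has a genuine gap: the asserted inequality $(F\cdot(sA-E))\le 0$ is false in general, and no bookkeeping from $s>\sqrt{abcu}$ together with $(F\cdot F)<0$ can produce it --- since $sA-E$ is nef and $F$ is effective, the true inequality is $(F\cdot(sA-E))\ge 0$, and it can be strict. Concretely, take $\PP^2$ (so $a=b=c=1$), $r=2$, $e_1=1$, $e_2=2$, hence $E=E_1+2E_2$ and $u=5$; the divisor $F$ consisting of the line through $P_1,P_2$ plus a line through $P_2$ has class $2A-E$ and $(F\cdot F)=-1$, so it is an $E$-uniform negative curve, while $s(I)=3$ (the nef boundary ray in $L$ is forced by the line through $P_1,P_2$, of class $A-E_1-E_2$) and $(F\cdot(sA-E))=6-5=1>0$. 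So your mechanism --- ``the weighted sum over components is $\le 0$, hence some component with positive $A$-degree is orthogonal to $T$'' --- never gets off the ground; note also the minor error that $(E_i\cdot(sA-E))=e_i>0$, not $0$. The paper locates $C$ in the support of $F$ differently: writing $T=(\alpha A+F)\RR_{\ge 0}$ with $\alpha>0$ (possible because $s$ exceeds the slope of $F$), for every $\epsilon>0$ the class $(\alpha-\epsilon)A+F$ is not nef, so some irreducible curve $C_\epsilon$ pairs negatively with it; since $(C_\epsilon\cdot A)\ge 0$ this forces $(C_\epsilon\cdot F)<0$, hence $C_\epsilon$ lies in the support of $F$; because $F$ has only finitely many components, a single component $C$ works for a sequence $\epsilon\to 0$, giving $(C\cdot(\alpha A+F))=0$, i.e.\ $T\subset\overline C^{\perp}$, and $(C\cdot F)<0$ for a component of the effective divisor $F$ forces $(C\cdot C)<0$. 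Your proof of the support statement needs to be repaired along these (or equivalent) lines; as written it does not go through.
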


\begin{proof} Let $F$ be an $E$-uniform negative curve. $F\sim mA-nE$ for some 
$m,n\in \ZZ_+$. Since $(F^2)<0$, and $F$ is effective, there exists an irreducible curve $C$ in the support of $F$ such that $(C\cdot F)<0$.
Since $((s A-E)\cdot(\tau A-E))\ge 0$ and $\tau<\sqrt{abcu}$, we have $s>\sqrt{abcu}$. We have 
$T=(\alpha A+F)\RR_{\ge 0}$ for some $\alpha>0$.  Since $T$ is a boundary ray of $AL$, for all $\epsilon>0$, there exists an irreducible curve $C_{\epsilon}$ on $X$ such that $(C_{\epsilon}\cdot ((\alpha-\epsilon)A+F))<0$. Since $(C_{\epsilon}\cdot A)\ge 0$, we must have
$(C_{\epsilon}\cdot F)< 0$, so that $C_{\epsilon}$ is in the support of $F$. Since $F$ has only a finite number of irreducible components, we have that $T=\overline C^{\perp}$ for some irreducible component $C$ of $F$.
\end{proof}

\begin{Proposition}\label{Prop3} There exist $t_1>0$ and $m_0>0$ such that $m\ge m_0$ implies there
exists a Cartier divisor $D=\alpha A-mE$ such that $D$ lies between the rays $T$ and the
translation of $T$ by $-t_1A$ such that $h^1(X,{\mathcal O}_X(D))\ne 0$.
\end{Proposition}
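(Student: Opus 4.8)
The plan is to turn the statement into an inequality between a space of global sections and an Euler characteristic. By Proposition \ref{Prop2}, $H^{2}(X,\mathcal{O}_{X}(\alpha A-mE))=0$ whenever $\alpha>-(a+b+c)$, so for a Cartier divisor $D=\alpha A-mE$ (i.e.\ $abc\mid\alpha$) with $\alpha>-(a+b+c)$ we have
\[
h^{1}(X,\mathcal{O}_{X}(D))=h^{0}(X,\mathcal{O}_{X}(D))-\chi(\mathcal{O}_{X}(D)).
\]
Hence it suffices to produce, for every $m\gg 0$, a multiple $\alpha$ of $abc$ with $sm-t_{1}\le\alpha\le sm$ and $h^{0}(X,\mathcal{O}_{X}(\alpha A-mE))>\chi(\mathcal{O}_{X}(\alpha A-mE))$. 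Using the Riemann--Roch formula (\ref{eq1}), $K_{X}=-(a+b+c)A+E_{1}+\cdots+E_{r}$, and the intersection numbers (\ref{eq6}), one writes $\chi(\mathcal{O}_{X}(\alpha A-mE))$ as an explicit upward parabola in $\alpha$ whose larger root $\rho(m)$ is $m\sqrt{abcu}+O(1)$, with error uniform in $m$. I would then split according to Lemma \ref{Lemma2}.

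In the case $T=(\sqrt{abcu}A-E)\RR_{\ge0}$, i.e.\ $s=\sqrt{abcu}$, we have $\rho(m)=sm+O(1)$. Take $\alpha$ to be the largest multiple of $abc$ with $\alpha<\rho(m)$ and $\alpha\le sm$. For $m\gg0$ one has $0<\alpha<\rho(m)$, so $\alpha$ lies strictly between the two roots of the parabola, giving $\chi(\mathcal{O}_{X}(\alpha A-mE))<0\le h^{0}(X,\mathcal{O}_{X}(\alpha A-mE))$ and therefore $h^{1}\ne0$; and $\alpha\ge\min(\rho(m),sm)-abc\ge sm-t_{1}$ for a uniform $t_{1}$, so $D=\alpha A-mE$ lies between $T$ and $T-t_{1}A$.

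In the case $T=\overline{C}^{\perp}\cap AL$ with $C$ an irreducible negative curve, we have $(C\cdot(sA-E))=0$, and since $C$ is none of the $E_{i}$ we get $(A\cdot C)>0$ and $(E\cdot C)=s(A\cdot C)>0$ (as in the proofs of Lemmas \ref{Lemma2} and \ref{Lemma3}). Here the parabola only detects $h^{1}\ne0$ near $\sqrt{abcu}A-E$, which is interior to the slab, so I would use $C$ as a fixed component instead. Fix a large constant $\epsilon_{0}>0$ (chosen below) and let $\alpha$ be the largest multiple of $abc$ with $\alpha\le sm-\epsilon_{0}$; put $\epsilon'=sm-\alpha\in[\epsilon_{0},\epsilon_{0}+abc)$, so $D:=\alpha A-mE$ is Cartier and lies between $T$ and $T-(\epsilon_{0}+abc)A$. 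Then $(D\cdot C)=(E\cdot C)(\tfrac{\alpha}{s}-m)=-\tfrac{(E\cdot C)\epsilon'}{s}<0$, so for $0\le j<k^{*}:=\lceil-(D\cdot C)/(C\cdot C)\rceil$ every effective divisor numerically equivalent to $D-jC$ contains $C$, whence $h^{0}(X,\mathcal{O}_{X}(D))=h^{0}(X,\mathcal{O}_{X}(D-k^{*}C))$. Note $k^{*}$ depends only on $\epsilon'$ (so is bounded as $m$ varies) but $k^{*}\ge (E\cdot C)\epsilon_{0}/(s\,|(C\cdot C)|)$ can be made as large as we wish. Since the $A$-coefficients of $D$ and $D-k^{*}C$ both tend to $+\infty$, Serre duality on the Cohen--Macaulay surface $X$ gives $H^{2}(X,\mathcal{O}_{X}(D))=H^{2}(X,\mathcal{O}_{X}(D-k^{*}C))=0$ for $m\gg0$, so $h^{0}(X,\mathcal{O}_{X}(D-k^{*}C))\ge\chi(\mathcal{O}_{X}(D-k^{*}C))$ and
\[
h^{1}(X,\mathcal{O}_{X}(D))=h^{0}(X,\mathcal{O}_{X}(D-k^{*}C))-\chi(\mathcal{O}_{X}(D))\ge\chi(\mathcal{O}_{X}(D-k^{*}C))-\chi(\mathcal{O}_{X}(D)).
\]
A short Riemann--Roch computation via (\ref{eq1}), using the adjunction formula $(C\cdot(C+K_{X}))=2p_{a}(C)-2$, the inequality $-(D\cdot C)>(k^{*}-1)|(C\cdot C)|$ built into the definition of $k^{*}$, and $p_{a}(C)\ge0$, shows this difference grows at least like $|(C\cdot C)|\,(k^{*})^{2}/2$ and is in particular strictly positive once $k^{*}$ exceeds a fixed threshold depending only on $C$. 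Choosing $\epsilon_{0}$ (hence $t_{1}$ in this case) accordingly gives $h^{1}(X,\mathcal{O}_{X}(D))>0$. Taking $t_{1}$ to be the larger of the two constants completes the proof.

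The main obstacle is exactly this second case: the Euler-characteristic estimate by itself pins $h^{1}\ne0$ only near $\sqrt{abcu}A-E$, and it takes the negative curve $C$ (as a forced base component) to push the non-vanishing up to within a bounded distance of $T$. A secondary technical point is the adjunction bookkeeping when $C$ meets the quotient singularities of $X$, where $(C\cdot C)$ and $(C\cdot K_{X})$ are only rational; they remain bounded independently of $m$ and the correction to adjunction becomes irrelevant once $k^{*}$ is large, so enlarging the threshold on $k^{*}$ absorbs it.
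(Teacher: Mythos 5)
Your argument is correct in outline, and your Case 1 is the paper's Case 1: make $\chi(\mathcal O_X(\alpha A-mE))$ negative just below $T$ and use Proposition \ref{Prop2} to kill $h^2$, so $h^1>0$. Your Case 2, however, is a genuinely different route. The paper subtracts the irreducible negative curve $C$ exactly once: with $D$ a Cartier divisor lying a bounded distance $\alpha(n)>\beta=\max\{0,(1-p_a(C))/(C\cdot A)\}$ below $T$, it uses the sequence $0\to\mathcal O_X(D-C)\to\mathcal O_X(D)\to\mathcal O_X(D)\otimes\mathcal O_C\to 0$, the equality $h^0(D-C)=h^0(D)$, vanishing of both $h^2$'s, and Riemann--Roch on the curve $C$ to get $h^1(D)\ge -\chi(\mathcal O_X(D)\otimes\mathcal O_C)=-\bigl((C\cdot D)+1-p_a(C)\bigr)>0$; the only Riemann--Roch input is for a line bundle on $C$. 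You instead strip $C$ off $k^*$ times as a forced base component and bound $h^1(D)\ge\chi(\mathcal O_X(D-k^*C))-\chi(\mathcal O_X(D))$, forcing positivity by a quadratic-in-$k^*$ estimate with $k^*$ made large via $\epsilon_0$. That works in substance, but it needs $\chi(\mathcal O_X(D-k^*C))$ and $H^2(X,\mathcal O_X(D-k^*C))=0$ for a divisor that is in general not Cartier ($C$ may pass through the quotient singularities), so (\ref{eq1}) and Proposition \ref{Prop2} do not apply verbatim: one must invoke Riemann--Roch for Weil divisors on the normal surface $X$ with bounded correction terms (bounded because the local class groups at the singular points are finite, or via (\ref{eq4}) and bookkeeping of the fractional part of $g^*(D-k^*C)$ on $Y$) together with Serre duality for reflexive sheaves --- true facts, but extra machinery which you only gesture at (``adjunction bookkeeping'' is not really the issue, since $(C\cdot K_X)$ is just a fixed constant); also $k^*=\lceil -(D\cdot C)/(C\cdot C)\rceil$ has a sign slip and should be $\lceil (D\cdot C)/(C\cdot C)\rceil$. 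In short, the paper's one-step restriction trick buys minimal machinery (everything is a Cartier divisor on $X$ or a line bundle on $C$), while your iterated-subtraction argument buys an explicit quantitative lower bound for $h^1$ at the cost of the non-Cartier Riemann--Roch input that would need to be supplied.
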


\begin{proof}  
Let $\gamma=\sqrt{abcu}A-E$. Let 
$$
d=(e_1+\cdots+e_r)\sqrt{\frac{abc}{u}}.
$$
Observe that $\sqrt{abcu}\in\QQ$ if and only if $d\in\QQ$.
Suppose that $n\in \ZZ_+$ and  $\alpha(n)\in \RR_{\ge 0}$ are such that
$n\gamma-\alpha(n)A$ is  a Cartier  divisor. Then by (\ref{eq6}) and (\ref{eq1}),
\begin{equation}\label{eq7}
\begin{array}{l}
\chi({\mathcal O}_X(n\gamma-\alpha(n)A))\\
\,\,\,=
n\frac{\sqrt{u}}{2\sqrt{abc}}\left((a+b+c)-d-2\alpha(n)\right)
+\frac{1}{2abc}\left(\alpha(n)^2-\alpha(n)(a+b+c)\right)+1 .
\end{array}
\end{equation}

By (\ref{eq15}), we always have $s\ge \tau$, so that we reduce to establishing  the Proposition in the two cases $s=\tau$ and $s>\tau$.

\vskip .2truein
\noindent {\bf Case 1}  Assume that $s=\tau$ so that by (\ref{eq15}),  $T=R=\gamma\RR_{\ge 0}$.
For $n\in\ZZ_+$, choose $\alpha(n)$ in (\ref{eq7}) so that $2abc\le \alpha(n)<3abc$. Then $(a+b+c)-d-2\alpha(n)<0$, so that 
$$
-h^1(X,\mathcal O_X(n\gamma-\alpha(n)A))\le\chi(\mathcal O_X(n\gamma-\alpha(n)A))<0
$$
for $n\gg0$.

\vskip .2truein
\noindent {\bf Case 2}  Assume that the boundary ray 
$R=(\tau A-E)\RR_{\ge 0}$ of $NL$ and the boundary ray  $T=(s A-E)\RR_{\ge 0}$ of $NA$ satisfy $s>\tau$.

We must have $s>\sqrt{abcu}$ with these assumptions, for otherwise,
$\gamma\in AL$, and there
would be an effective divisor  $F\equiv \alpha A-\beta E$ such that 
$\frac{\alpha}{\beta}<\sqrt{abcu}$, so that $F$ is an $E$-uniform negative curve. We
would then have that $(F\cdot\gamma)<0$, a contradiction.

 By Lemma \ref{Lemma2}, we have $T=\overline C^{\perp}$, where $C$ is an irreducible negative curve. Let $p_a(C)$ be the arithmetic genus of $C$. Let $\delta = s A-E$. Let
 $$
 \beta={\rm max}\{0,\frac{1-p_a(C)}{(C\cdot A)}\}.
 $$
  For $n\in \ZZ_+$, let $\alpha(n)$
be such that 
\begin{equation}\label{eq13}
\beta <\alpha(n)\le\beta+abc
\end{equation}
and $n\delta-\alpha(n)A$ is a Cartier divisor.

We have an exact sequence of $\mathcal{O}_X$ modules
\begin{equation}\label{eq14}
0\rightarrow \mathcal{O}_X(n\delta-\alpha(n)A-C)\rightarrow \mathcal {O}_X(n\delta-\alpha(n)A)\rightarrow   \mathcal{ O}_X(n\delta-\alpha(n)A)\otimes\mathcal{O}_C\rightarrow 0.
\end{equation}

Since $s>\sqrt{abcu}$, be have that $h^0(X,\mathcal{O}_X(n\delta-\alpha(n)A))>0$ for $n\gg0$. We further have
that $((n\delta-\alpha(n)A)\cdot C)<0$, so since $C$ is an integral curve, for $n\gg 0$,
$$
h^0(X,\mathcal{O}_X(n\delta-\alpha(n)A-C))=h^0(X,\mathcal{O}_X(n\delta-\alpha(n)A))>0.
$$
We have that 
$$
h^2(X,\mathcal{O}_X(n\delta-\alpha(n)A-C))=h^2(X,\mathcal{O}_X(n\delta-\alpha(n)A))=0
$$
 by Proposition \ref{Prop2}. 

From (\ref{eq14}) we now have
$$
\begin{array}{l}
h^1(X,\mathcal{O}_X(n\delta-\alpha(n)A-C))-h^1(X,\mathcal {O}_X(n\delta-\alpha(n)A))\\
\,\,\,= \chi(\mathcal {O}_X(n\delta-\alpha(n)A))-\chi(\mathcal {O}_X(n\delta-\alpha(n)A-C))\\
\,\,\,= \chi(\mathcal {O}_X(n\delta-\alpha(n)A)\otimes \mathcal{O}_C)\\
=(C\cdot(n\delta-\alpha(n)A))+1-p_a(C)<0
\end{array}
$$
where  the last equality is by the Riemann Roch theorem for the curve $C$, and (\ref{eq13}). Thus   $h^1(X,\mathcal{O}_X(n\delta-\alpha(n)A))>0$
for $n\gg 0$.

\end{proof}

Recall that $\lfloor x\rfloor$ is the greatest integer in a real number $x$.

\begin{Theorem}\label{Theorem4} There exists a bounded function $\sigma_I:\NN\rightarrow \ZZ$ such
that
$$
{\rm reg}(I^{(m)})=\lfloor s(I) m\rfloor +\sigma_I(m)
$$
for all $m\in\NN$.
\end{Theorem}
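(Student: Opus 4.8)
The plan is to squeeze $a_2(I^m)$ between two linear functions of $m$ sharing the leading coefficient $s(I)$, using Proposition~\ref{Prop1} for the upper bound and Proposition~\ref{Prop3} for the lower bound. Recall first, as recorded in Section~\ref{sec2}, that since $I^{(m)}=(I^m)^{\rm sat}$ and ${\rm reg}'((I^m)^{\rm sat})>0$, the isomorphism \eqref{eqLC3} gives
\[
{\rm reg}(I^{(m)})=a_2(I^m)+2=\max\{\,n\in\ZZ\mid H^1(X,\mathcal O_X(nA-mE))\ne 0\,\}+2
\]
for every $m\ge 1$; in particular $a_2(I^m)$ is a finite integer. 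Hence it suffices to produce constants $c_1,c_2$ depending only on $I$ together with an integer $m_1$ such that $s(I)m-c_1\le a_2(I^m)\le s(I)m+c_2$ for all $m\ge m_1$: then $\sigma_I(m)=a_2(I^m)+2-\lfloor s(I)m\rfloor$ stays in a bounded range for $m\ge m_1$, while $\sigma_I(1),\dots,\sigma_I(m_1-1)$ form a finite set of integers, so $\sigma_I$ is bounded on $\NN$.

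For the upper bound I would apply Proposition~\ref{Prop1}. Writing $AL=\{\lambda_1\overline A+\lambda_2(s(I)\overline A-\overline E):\lambda_1,\lambda_2\ge 0\}$, a short computation shows that $\overline{nA-mE}$ lies in the translate of $AL$ by $t_0(b_0abc A-E)+abc A$ exactly when $m\ge t_0$ and $n\ge s(I)(m-t_0)+t_0b_0abc+abc$. By Proposition~\ref{Prop1}, $H^1(X,\mathcal O_X(nA-mE))=0$ for all such $n$, so with $c_2:=t_0b_0abc+abc-s(I)t_0$ we obtain $a_2(I^m)<s(I)m+c_2$ for every $m\ge t_0$. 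For the lower bound I would apply Proposition~\ref{Prop3}: for each $m\ge m_0$ it yields a Cartier divisor $\alpha A-mE$ with $h^1(X,\mathcal O_X(\alpha A-mE))\ne 0$ whose class lies between the ray $T=(s(I)\overline A-\overline E)\RR_{\ge 0}$ and its translate by $-t_1\overline A$; this last constraint forces $\alpha\ge s(I)m-t_1$, whence $a_2(I^m)\ge\alpha\ge s(I)m-t_1$ for every $m\ge m_0$. Taking $c_1:=t_1$ and $m_1:=\max(t_0,m_0)$ completes the two-sided estimate and hence proves the theorem.

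The real content has already gone into Propositions~\ref{Prop1} and \ref{Prop3} (and into the analysis of the cones $NL$ and $AL$ preceding Lemma~\ref{Lemma2}); the only point needing care in assembling the proof is to check that the constants emerging from the two cone translations are genuinely independent of $m$, which is clear from the explicit inequalities above. Note also that no rationality statement about $s(I)$ is required here, since only $\lfloor s(I)m\rfloor$ enters the definition of $\sigma_I$.
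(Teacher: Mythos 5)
Your argument is correct and is essentially the paper's own proof: the paper also reduces ${\rm reg}(I^{(m)})$ to $\max\{n\mid H^1(X,\mathcal O_X(nA-mE))\ne 0\}$ via (\ref{eqLC2}), (\ref{eqLC3}) and Proposition \ref{Prop2}, and then squeezes this quantity between $s(I)m-t_1$ and $s(I)m+c_2$ using Propositions \ref{Prop3} and \ref{Prop1}, handling small $m$ by finiteness. You have simply written out the cone-translation inequalities that the paper leaves implicit.
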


\begin{proof} For $i\ge 2$,
$$
H^i_{\mathfrak m}(I^{(m)})_{n}=H^{i-1}(X,{\mathcal O}_X(n A-mE))
$$
by equations (\ref{eqLC2}) and (\ref{eqLC3}).
The theorem now follows from Propositions \ref{Prop1}, \ref{Prop2} and \ref{Prop3} for large $m$, and
thus the theorem is true for all $m$.
\end{proof}

A function $\sigma:\NN\rightarrow \ZZ$ is {\it eventually periodic} if $\sigma(m)$ is periodic for
 $m\gg 0$.

Recall (Theorem \ref{Theorem3}) that  $s(I)\ge \sqrt{abcu}$, and if $s(I)>\sqrt{abcu}$, then $s(I)$ is a rational number.

\begin{Theorem}\label{Theorem5}  Suppose that $s(I)>\sqrt{abcu}$
and $K$ has characteristic zero or is an algebraic closure of a finite field.
Then the function $\sigma_I(m)$ of Theorem \ref{Theorem4} is
eventually periodic.
\end{Theorem}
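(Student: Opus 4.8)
The plan is to exploit the structure established in Proposition \ref{Prop3} and Lemma \ref{Lemma2}. Since $s(I)>\sqrt{abcu}$, Lemma \ref{Lemma2} gives an irreducible negative curve $C$ with $T=\overline C^{\perp}\cap AL$, and $s=s(I)$ is rational by Theorem \ref{Theorem3}. Write $s=p/q$ in lowest terms, so that $C\sim p'A-q'E$ for suitable positive integers $p',q'$ with $p'/q'=s$ (using that $C$ lies on the ray $T$ up to an $A$-translate; more precisely $C\equiv \alpha A - \beta E$ with $\alpha/\beta$ minimal). Recall from the remark after Proposition \ref{Prop2} that ${\rm reg}(I^{(m)})=a_2(I^m)+2$, and
$$
a_2(I^m)={\rm max}\{n\in\ZZ\mid H^1(X,\mathcal O_X(nA-mE))\ne 0\}.
$$
So everything reduces to controlling, for each fixed $m$, the largest $n$ with $H^1(X,\mathcal O_X(nA-mE))\ne 0$, and showing that $\lfloor n - ms\rfloor$ is eventually periodic in $m$.

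The key steps, in order, are as follows. First I would fix a residue $\rho$ of $m$ modulo $q$ (and also modulo $abc$, to handle Cartier-ness; so work modulo $L={\rm lcm}(q,abc)$) and restrict attention to $m\equiv \rho \pmod L$. Along such an arithmetic progression, the divisor $D_m$ realizing the maximal $n$ should, by Propositions \ref{Prop1}, \ref{Prop2}, \ref{Prop3}, lie in a bounded strip parallel to the ray $T$: Proposition \ref{Prop1} forces $H^1$ to vanish once $D$ is pushed far enough into the translated ample cone, and Proposition \ref{Prop3} guarantees $H^1\ne0$ somewhere in a strip of bounded $A$-width near $T$. Thus for each such $m$, the optimal $n=n(m)$ satisfies $n(m)=ms - e(m)$ for a bounded nonnegative quantity $e(m)$. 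Second, I would analyze $H^1(X,\mathcal O_X(nA-mE))$ along the progression via the exact sequence
$$
0\rightarrow \mathcal O_X(nA-mE-C)\rightarrow \mathcal O_X(nA-mE)\rightarrow \mathcal O_X(nA-mE)\otimes\mathcal O_C\rightarrow 0
$$
exactly as in the proof of Proposition \ref{Prop3}. Peeling off copies of $C$ repeatedly, I reduce $nA-mE$ to a divisor $D'$ with $(D'\cdot C)$ bounded; since $D'$ then lies in a bounded region of $L$ (once translated appropriately, it is within bounded distance of the ray $T$ and of the other walls), there are only finitely many numerical classes to consider. The cohomology $H^1$ of a divisor numerically close to the wall $\overline C^{\perp}$ is governed by $H^1$ of $\mathcal O_C$-twists, i.e.\ by a line bundle of bounded degree on the fixed curve $C$, so the pattern of vanishing/nonvanishing of $H^1$ as we slide $D'$ by $C$ and by $A$ is eventually periodic (this is where the restriction on $K$ enters: over a field of characteristic zero, or an algebraic closure of a finite field, the relevant $H^1$'s of line bundles on the fixed curves $C$, $E_i$ and their normalizations can be shown to stabilize, because there are only finitely many isomorphism classes of the degree-zero pieces involved — over $\overline{\mathbb F_p}$ every line bundle on a curve over a finite field is torsion in the Picard group, and in characteristic zero one uses semicontinuity/Serre-vanishing arguments as in \cite{CHT}).

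The main obstacle will be the second step: converting "the optimal divisor lies in a bounded strip" into "$\sigma_I(m)$ is genuinely periodic". The subtlety is that $H^1(X,\mathcal O_X(nA-mE))$ need not depend only on the numerical class modulo the action of adding $C$ — one must track the actual restriction line bundles $\mathcal O_X(nA-mE)|_C$, $\mathcal O_X(nA-mE)|_{E_i}$, and the corresponding connecting maps in cohomology, and show these cycle with period dividing some fixed $N$. This is precisely the point where the hypothesis on $K$ is indispensable and where the argument of \cite{CHT} (and its positive-characteristic refinement over $\overline{\mathbb F_p}$) must be imported: one shows that for $m$ in a fixed congruence class, translation by the "period vector" $q'$ copies of $E$ and $p'$ copies of $A$ acts on all the relevant restricted line bundles by tensoring with a fixed line bundle which is torsion (in the $\overline{\mathbb F_p}$ case) or whose higher cohomology has stabilized (in the characteristic zero case), making the function $m\mapsto n(m)-ms$ periodic for $m\gg0$. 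Once $n(m)=ms-e(m)$ with $e(m)$ eventually periodic along each residue class mod $L$, we get ${\rm reg}(I^{(m)})=n(m)+2$ and hence $\sigma_I(m)={\rm reg}(I^{(m)})-\lfloor ms\rfloor = 2-e(m) + (ms-\lfloor ms\rfloor)$, and since $ms-\lfloor ms\rfloor$ is periodic in $m$ (as $s\in\QQ$), $\sigma_I(m)$ is eventually periodic, completing the proof.
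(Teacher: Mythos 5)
Your overall plan runs parallel to the paper's proof: use Propositions \ref{Prop1}, \ref{Prop2} and \ref{Prop3} to confine the relevant nonvanishing of $H^1(X,\mathcal O_X(nA-mE))$ to a bounded strip along the rational ray $T$, and then show that the vanishing/nonvanishing pattern repeats under translation along $T$. But the decisive step is exactly the one you defer, and your sketched justification for it is not correct in characteristic zero. After restricting to the relevant curve, the translating bundle is numerically trivial, i.e.\ of degree zero; over a field such as $\CC$ there are \emph{infinitely} many isomorphism classes of degree-zero line bundles on that curve (the Picard variety is positive dimensional), so the claim that ``there are only finitely many isomorphism classes of the degree-zero pieces involved'' fails, and neither semicontinuity nor Serre vanishing yields periodicity (Serre vanishing requires an ample twist, whereas here the twist is numerically trivial on the curve). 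The paper supplies this step with genuinely nontrivial input: Proposition 13 of \cite{CS}, giving an effective curve $C\subset Y$ on which the pullback of the translating bundle is numerically trivial and onto which the restriction maps on $H^1$ are eventually isomorphisms, and then Theorem 8 of \cite{CS} in characteristic zero; only over $\overline{\FF}_p$ does the torsion argument you describe suffice. Citing \cite{CHT} does not fill this hole --- the periodicity machinery is from \cite{CS} --- so the heart of the theorem is missing from your argument.

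Two further points where your setup would break as written. First, the irreducible negative curve $C$ of Lemma \ref{Lemma2} need not satisfy $C\equiv \alpha A-\beta E$: $N_1(X)$ has rank $r+1$, and $(C\cdot(s A-E))=0$ only places $\overline C$ in the hyperplane orthogonal to $sA-E$, not in the plane spanned by $\overline A$ and $\overline E$. The paper sidesteps this by translating by a nef Cartier divisor $G$ with $\overline G\,\RR_{\ge 0}=T$ (which does lie in that plane, since $s(I)$ is rational by Theorem \ref{Theorem3}), rather than by peeling off copies of the negative curve. Second, $nA-mE$ is Cartier only when $abc\mid n$, and fixing the residue of $m$ modulo $\mathrm{lcm}(q,abc)$ does not control $n$; your restriction sequences and Riemann--Roch computations on $X$ therefore require the passage to the minimal resolution $Y$, with the round-downs $\lfloor g^*(D_i)\rfloor$, Sakai's projection formula (\ref{eq4}) and Lemma \ref{Lemma1}, which the paper uses and your sketch omits.
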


\begin{proof}  By Propositions
\ref{Prop1}, \ref{Prop2} and \ref{Prop3}, we need only compute $h^1(X,{\mathcal O}_X(n A-mE))$ for $n
A-mE$ between the rays $T$ translated up by $(t_0b_0+1)abcA$ and $T$ translated  down by $-t_1A$.
Call this region $\Delta$.

By Theorem \ref{Theorem3}, there exists a numerically effective Cartier divisor $G$ such that $T=\overline G\RR_{\ge 0}$. We have $(G^2)>0$.  Since $\overline G$ is
rational, there exists a finite number of Weil divisors $D_i$ with $\overline D_i\in\Delta$ such that every divisor $D$ with 
$\overline D\in\Delta$ can be written as $D\sim D_i+nG$ for some $i$ and some $n\in \NN$. Let ${\mathcal
L}={\mathcal O}_X(G)$.

Since $(g^*{\mathcal O}_{X}(abc)\cdot g^*{\mathcal L})>0$, Serre duality on $Y$ implies
\begin{equation}\label{eq5}
h^2(Y,{\mathcal O}_Y(\lfloor g^*(D_i)\rfloor)\otimes g^*{\mathcal L}^n)=0
\end{equation}
for all $i$ and for $n\gg0$.

By 2 of Lemma \ref{Lemma1}, (\ref{eq4}) and (\ref{eq5}),  for all $i$ and for $n\gg0$ we have
$$
\begin{array}{lll}
h^1(X,{\mathcal O}_X(D_i)\otimes{\mathcal L}^n)&=&h^0(X,{\mathcal O}_X(D_i)\otimes{\mathcal
L}^n)-\chi({\mathcal O}_X(D_i)\otimes{\mathcal L}^n)\\
&=& h^0(Y,{\mathcal O}_Y(\lfloor g^*(D_i)\rfloor)\otimes g^*({\mathcal L}^n))-\chi({\mathcal O}_X(D_i)\otimes{\mathcal L}^n)\\
&=&h^1(Y,{\mathcal O}_Y(\lfloor g^*(D_i)\rfloor)\otimes g^*({\mathcal L}^n))\\
&&\,\,\,+\chi({\mathcal O}_Y(\lfloor g^*(D_i)\rfloor )\otimes g^*({\mathcal L}^n)) - \chi({\mathcal
O}_X(D_i)\otimes{\mathcal L}^n).
\end{array}
$$
By the Riemann Roch Theorem on $Y$ and 1 of Lemma \ref{Lemma1} we have that
$$
\chi({\mathcal O}_Y(\lfloor g^*(D_i)\rfloor)\otimes g^*({\mathcal L}^n)) - \chi({\mathcal
O}_X(D_i)\otimes{\mathcal L}^n)
$$
is a polynomial in $n$ for all $i$.

By Proposition 13 of \cite{CS}, there exists an effective divisor $C$ on $Y$ such that $g^*({\mathcal L})\otimes{\mathcal O}_C$ is numerically trivial, and the restriction maps
$$
H^1(Y,{\mathcal O}_Y(\lfloor g^*(D_i)\rfloor)\otimes g^*({\mathcal L}^n))
\rightarrow 
H^1(C,{{\mathcal O}_C\otimes \mathcal O}_Y(\lfloor g^*(D_i)\rfloor)\otimes g^*({\mathcal L}^n))
$$
are isomorphisms for $n\gg 0$ and all $i$.

In the case when $K$ has characteristic zero, Theorem 8   of \cite{CS}, shows that for all $i$,
$h^1(C,{{\mathcal O}_C\otimes \mathcal O}_Y(\lfloor g^*(D_i)\rfloor)\otimes g^*({\mathcal L}^n))
$ is eventually periodic in $n$ for all $i$. In the case when $K$ is an algebraic closure of a finite field, then the numerically trivial invertible sheaf ${\mathcal O}_C\otimes g^*({\mathcal L})$ must be torsion, so some power
is isomorphic to ${\mathcal O}_C$. Thus we trivially have that $h^1(C,{{\mathcal O}_C\otimes \mathcal O}_Y(\lfloor g^*(D_i)\rfloor)\otimes g^*({\mathcal L}^n))
$ is eventually periodic in $n$ for all $i$.

In either case of $K$,
$h^1(Y,{\mathcal O}_Y(\lfloor g^*(D_i)\rfloor)\otimes g^*({\mathcal L}^n))$ is eventually periodic as a function
of $n$. Thus $\sigma(m)$ is eventually periodic. 
\end{proof}
 When $K$ is a field of positive characteristic which has positive transcendence degree over the prime field,
the conclusions of Theorem \ref{Theorem5} may fail. An example of a set of points in ordinary projective space $\PP^2$ where $\sigma(m)$
is not eventually periodic is given in Example 4.4 \cite{CHT}.

\begin{Corollary}\label{Cor6} Suppose  there exists an $E$-uniform negative curve,
and $K$ has characteristic zero, or is a finite field. Then the function $\sigma_I(m)$ of Theorem \ref{Theorem4} is
eventually periodic.
\end{Corollary}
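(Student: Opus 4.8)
The plan is to deduce the corollary from Theorem \ref{Theorem5} once the required numerical input has been extracted from the hypothesis. Suppose $X(I)$ carries an $E$-uniform negative curve $F$. By Lemma \ref{Lemma3} this forces $s(I)>\sqrt{abcu}$, and moreover $T=\overline{C}^{\perp}\cap AL$ for some irreducible negative curve $C$ in the support of $F$. In particular the standing hypothesis of Theorem \ref{Theorem5}, namely $s(I)>\sqrt{abcu}$, holds. Hence if $K$ has characteristic zero, Theorem \ref{Theorem5} applies verbatim and $\sigma_I(m)$ is eventually periodic; this disposes of the first case with no further work.

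When $K$ is a finite field the idea is to base change to its algebraic closure $\overline{K}$, which is the algebraic closure of a finite field, and to apply Theorem \ref{Theorem5} there. Concretely, I would set $I'=I\otimes_K\overline{K}\subset\overline{K}[x,y,z]$, let $X'=X\times_K\overline{K}$, and carry $A$, $E$, $F$, $C$ along by pullback. Then ${\rm reg}((I')^{(m)})={\rm reg}(I^{(m)})$, since forming the graded local cohomology modules $H^i_{\mathfrak m}(I^{(m)})$ commutes with the flat base change $K\to\overline{K}$; the intersection numbers of (\ref{eq6}) and the Riemann--Roch formula (\ref{eq1}) are insensitive to the extension, so the cones $NL$ and $AL$, and hence $s(I')=s(I)$ and the ray $T$, are unchanged; and the pullback of $F$ is still an effective divisor of the form $mA-nE$ with negative self-intersection, i.e.\ an $E$-uniform negative curve on $X'$. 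Thus Theorem \ref{Theorem5} applies to $I'$ over $\overline{K}$ and shows that $\sigma_{I'}(m)=\sigma_I(m)$ is eventually periodic.

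The only real work lies in the verification of the previous paragraph, that every quantity entering the hypotheses of Theorem \ref{Theorem5} — the $s$-invariant, the shape of $AL$, and the presence of the $E$-uniform negative curve — descends unchanged under $K\subset\overline{K}$; once this is in place the corollary is simply Lemma \ref{Lemma3} followed by Theorem \ref{Theorem5}. (If one reads the hypothesis ``$K$ is a finite field'' strictly within the setting of Section \ref{sec3}, where $K$ is algebraically closed, it is to be understood as ``$K$ is the algebraic closure of a finite field'', and then this base-change step is vacuous.)
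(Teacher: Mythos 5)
Your proof is correct and follows the paper's own route exactly: Lemma \ref{Lemma3} supplies $s(I)>\sqrt{abcu}$, and Theorem \ref{Theorem5} then gives eventual periodicity (the paper's proof is precisely this one line). Your extra care about ``finite field'' versus ``algebraic closure of a finite field'' addresses a wording discrepancy the paper silently glosses over, and either your base-change argument or the strict reading within the algebraically closed setting of Section \ref{sec3} resolves it adequately.
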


\begin{proof} This follows from Lemma \ref{Lemma3} and Theorem \ref{Theorem5}.
\end{proof}

An important case of this construction is when $i=1$, and
$I=P(a,b,c)$ is the prime ideal of a monomial space curve. 
As a corollary to Theorems \ref{Theorem4} and \ref{Theorem5}, we have the following application  to  monomial space curves.

\begin{Corollary}\label{Cor7*}  Suppose that $I=P(a,b,c)$ is the prime ideal of a monomial space curve, and   there exists a  negative  curve on $X(I)$.
 Then $s(I)$ is a rational number, and the function $\sigma_I(m)$ of Theorem \ref{Theorem4} is
eventually periodic. 
\end{Corollary}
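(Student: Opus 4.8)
The plan is to deduce everything from Lemma~\ref{Lemma3}, Theorem~\ref{Theorem3} and Theorem~\ref{Theorem5} (that is, essentially from Corollary~\ref{Cor6}), the two things needing attention being that for a monomial space curve a negative curve is automatically $E$--uniform, and that the field hypothesis of Theorem~\ref{Theorem5} can be arranged by descent to the prime field. First I would pin down the geometry: since $S/P(a,b,c)\cong K[t^a,t^b,t^c]$ is a one--dimensional graded domain, ${\rm proj}$ of it is a single reduced point $P_1=[1:1:1]$, which avoids the three singular points of $\PP(a,b,c)$, and $H^0_{\mathfrak m}(S/P(a,b,c))=0$; hence $P(a,b,c)=I_{P_1}$ and we are in the situation of Section~\ref{sec3} with $r=1$, $e_1=1$, $u=1$, $X=X(I)$ the blow up of the smooth point $P_1$, and $E=E_1$. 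As $\PP(a,b,c)$ is smooth at $P_1$ and has class group $\ZZ$, the localization sequence $0\to\ZZ\,\overline E\to{\rm Cl}(X)\to{\rm Cl}(\PP(a,b,c)\setminus\{P_1\})\to 0$ splits by pull back, so ${\rm Cl}(X)=\ZZ\,\overline A\oplus\ZZ\,\overline E$.

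Next I would show the hypothesis produces an $E$--uniform negative curve. A negative curve is an effective divisor $D$ with $(D\cdot D)<0$, so some irreducible component $C$ of $D$ has $(D\cdot C)<0$, hence $(C\cdot C)<0$; I take $C\ne E$, which is the substantive content of the hypothesis, as $E$ itself is always negative. Being irreducible and distinct from $E$, $C$ is the strict transform of an irreducible curve $f(C)$ on $\PP(a,b,c)$, so $C\sim\alpha A-\beta E$ in ${\rm Cl}(X)$ with $\alpha=abc\,(C\cdot A)\in\ZZ_+$ and $\beta={\rm mult}_{P_1}(f(C))\in\ZZ_{\ge 0}$; by (\ref{eq6}) we have $(C\cdot C)=\alpha^2/(abc)-\beta^2$, so negativity forces $\beta\ge 1$, and $C$ is $E$--uniform. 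Lemma~\ref{Lemma3} now gives $s(I)>\sqrt{abcu}=\sqrt{abc}$, whence $s(I)\in\QQ$ by Theorem~\ref{Theorem3}.

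For the eventual periodicity of $\sigma_I(m)$ I would apply Theorem~\ref{Theorem5} after reducing the ground field. The ideal $P(a,b,c)$ is generated by binomials with coefficients in $\{0,\pm 1\}$, so $P_1$, $X$, the divisors $A$ and $E$, and all the groups $H^i(X,{\mathcal O}_X(nA-mE))$ are defined over the prime field $k_0\subseteq K$; flat base change then shows that the integers $a_i(I^m)$, and hence ${\rm reg}(I^{(m)})$ and $\sigma_I(m)$, are unchanged when $K$ is replaced by $\overline{k_0}$. Thus we may assume $K=\overline{\QQ}$ or $K=\overline{\FF}_p$, in either case a field of characteristic zero or the algebraic closure of a finite field, and then, since $s(I)>\sqrt{abcu}$, Theorem~\ref{Theorem5} applies and $\sigma_I(m)$ is eventually periodic. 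The main obstacle, though a mild one, is the $E$--uniformity step: it rests on the fact that for $r=1$ the only irreducible curve meeting $A$ in degree $0$ is $E$ while every other effective divisor has the form $\alpha A-\beta E$ with $\alpha>0$, so that the hypothesis of Lemma~\ref{Lemma3} is automatic; this, and the descent that lets us drop the field restriction of Corollary~\ref{Cor6}, is precisely where being a \emph{monomial prime} rather than an ideal of several fat points enters.
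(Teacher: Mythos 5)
Your argument is correct and is essentially the proof the paper intends: Corollary \ref{Cor7*} is stated without a written proof, as an immediate consequence of Lemma \ref{Lemma3}, Theorem \ref{Theorem3} and Theorem \ref{Theorem5} (i.e.\ of Corollary \ref{Cor6}), and your chain of reasoning is exactly that one. The two steps you make explicit --- that for $r=1$, $e_1=1$ one has ${\rm Cl}(X)=\ZZ\overline A\oplus\ZZ\overline E$, so a negative curve distinct from $E$ (the only sensible reading of the hypothesis, since $E$ itself satisfies $(E\cdot E)<0$) is automatically $E$-uniform, and the flat base-change descent to $\overline{\QQ}$ or $\overline{\FF}_p$ that removes the field restriction of Theorem \ref{Theorem5}/Corollary \ref{Cor6} (note that $s(I)$ is also unchanged under this descent, by Theorem \ref{Theoremlim}, so $\sigma_I$ really is the same function) --- are precisely the points the paper leaves implicit, and you handle both correctly.
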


\section{Uniform negative curves and Nagata's conjecture}\label{sec5}

Let $S$ be a polynomial ring as in Section \ref{sec3}.
Let $C = K[u,v,w]$ be a polynomial ring with 
${\rm wt}(u) = {\rm wt}(v) = {\rm wt}(w) = 1$.

Consider the $K$-algebra homomorphism
\[
\delta : S \longrightarrow C
\]
defined by $\delta(x) = u^a$, $\delta(y) = v^b$, 
$\delta(z) = w^c$, where $a$, $b$, $c$ are pairwise relatively prime 
positive integers.

Let
\[
I = I_{P_1}^{e_1} \cap \cdots \cap I_{P_r}^{e_r}
\]
be an ideal of $S$ as in Section \ref{sec3}.

Consider the following two conditions:
\begin{itemize}
\item[(A1)]
$K$ is an algebraically closed field such that ${\rm ch}(K)$ is $0$ or ${\rm ch}(K)$ does not divide $abc$.
\item[(A2)]
$I_{P_i} \not\ni xyz$ for $i = 1, \ldots, r$.
\end{itemize}

\begin{Lemma}\label{split}
Assume the conditions (A1) and (A2) as above.
There are distinct prime ideals $Q_{i1}$, $Q_{i2}$, \ldots, $Q_{i,abc}$
of $C$ such that
\[
I_{P_i}^{(m)}C = \bigcap_{j = 1}^{abc} Q_{ij}^{(m)}
\]
for $m > 0$ and $i = 1, \ldots, r$.
\end{Lemma}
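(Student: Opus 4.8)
The plan is to recognise $\delta$ as a finite flat Kummer-type covering which is étale off the coordinate hyperplanes, and then to transfer the (essentially trivial) primary decomposition of $I_{P_i}^{(m)}$ along it. Since $\delta(x)=u^a$, $\delta(y)=v^b$, $\delta(z)=w^c$ are algebraically independent, $\delta$ is injective, $\delta(S)=K[u^a,v^b,w^c]$, and $C$ is a free $\delta(S)$-module of rank $abc$ with basis $\{u^iv^jw^k : 0\le i<a,\ 0\le j<b,\ 0\le k<c\}$; in particular $\delta$ is flat (this is the flatness invoked in Section~\ref{sec2}). Let $\mu_n$ denote the group of $n$-th roots of unity in $K$, and let $G=\mu_a\times\mu_b\times\mu_c$ act on $C$ by scaling $u,v,w$; then $C^G=\delta(S)$, and by (A1) the group $G$ has order $abc$.

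First I would prove that the morphism $\rho\colon{\rm Spec}(C)\to{\rm Spec}(S)$ corresponding to $\delta$ is étale at every prime of $C$ lying over $I_{P_i}$. By (A2) we have $xyz\notin I_{P_i}$, so $I_{P_i}$ lies in the principal open set $D(xyz)\subset{\rm Spec}(S)$; its preimage is $D(\delta(xyz))=D(u^av^bw^c)=D(uvw)$, and on $D(uvw)$ the Jacobian of $\rho$ is ${\rm diag}(au^{a-1},bv^{b-1},cw^{c-1})$, a unit there because ${\rm ch}(K)$ does not divide $abc$ by (A1). Hence $\rho\colon D(uvw)\to D(xyz)$ is finite étale of degree $abc$. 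Since $K$ is algebraically closed, this forces the fibre of the induced map ${\rm proj}(C)\to{\rm proj}(S)$ over $P_i$ to consist of exactly $abc$ distinct $K$-rational points $R_{i1},\dots,R_{i,abc}$, permuted simply transitively by $G$: concretely, if $[\lambda_0:\mu_0:\nu_0]$ represents $P_i$ with $\lambda_0\mu_0\nu_0\neq0$, then the preimages are the points $[\alpha:\beta:\gamma]$ with $\alpha^a=\lambda_0$, $\beta^b=\mu_0$, $\gamma^c=\nu_0$, and these are pairwise distinct in ${\rm proj}(C)$ because $\gcd(a,b,c)=1$. Let $Q_{ij}\subset C$ be the homogeneous prime ideal of $R_{ij}$; these are precisely the minimal primes of $I_{P_i}C$, each satisfies $Q_{ij}\cap S=I_{P_i}$, and for each $j$ the induced local homomorphism $S_{I_{P_i}}\to C_{Q_{ij}}$ is étale, so in particular $I_{P_i}C_{Q_{ij}}=Q_{ij}C_{Q_{ij}}$.

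Next I would localise. Using $I_{P_i}^{(m)}S_{I_{P_i}}=I_{P_i}^mS_{I_{P_i}}$ (the definition of the symbolic power of a prime) and the previous equality,
\[
I_{P_i}^{(m)}C_{Q_{ij}}=I_{P_i}^mC_{Q_{ij}}=\bigl(I_{P_i}C_{Q_{ij}}\bigr)^m=\bigl(Q_{ij}C_{Q_{ij}}\bigr)^m=Q_{ij}^mC_{Q_{ij}}=Q_{ij}^{(m)}C_{Q_{ij}}
\]
for every $j$. It remains to check that $I_{P_i}^{(m)}C$ has no embedded primes, so that it is reconstructed from its localisations at the $Q_{ij}$. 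Since $C$ is $\delta(S)$-free of rank $abc$, the ring $C/I_{P_i}C$ is a free module of rank $abc$ over $S/I_{P_i}$, which is a one-dimensional domain and hence Cohen--Macaulay; therefore $C/I_{P_i}C$ is Cohen--Macaulay of dimension one, so it is unmixed and ${\rm Ass}_C(C/I_{P_i}C)=\{Q_{i1},\dots,Q_{i,abc}\}$. Flatness of $\delta$ together with ${\rm Ass}_S(S/I_{P_i}^{(m)})=\{I_{P_i}\}$ gives ${\rm Ass}_C(C/I_{P_i}^{(m)}C)={\rm Ass}_C(C/I_{P_i}C)=\{Q_{i1},\dots,Q_{i,abc}\}$, all minimal. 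Hence the irredundant primary decomposition of $I_{P_i}^{(m)}C$ is obtained by contracting its localisations, and the displayed identity yields
\[
I_{P_i}^{(m)}C=\bigcap_{j=1}^{abc}\bigl(I_{P_i}^{(m)}C_{Q_{ij}}\cap C\bigr)=\bigcap_{j=1}^{abc}\bigl(Q_{ij}^mC_{Q_{ij}}\cap C\bigr)=\bigcap_{j=1}^{abc}Q_{ij}^{(m)},
\]
which is the assertion of the lemma.

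The main obstacle is the geometric step in the second paragraph: showing that $\rho$ really is étale over each $P_i$, i.e.\ that the fibre breaks up into $abc$ distinct reduced points with trivial inertia. This is exactly where both hypotheses are needed --- (A1) to make the maps $t\mapsto t^a,t^b,t^c$ separable and to supply the full groups of roots of unity, and (A2) to keep $P_i$ off the ramification locus $V(xyz)$, which also contains the three singular points $V(x,y),V(x,z),V(y,z)$ of $\PP(a,b,c)$. Once étaleness over $P_i$ is in hand, the remaining ingredients --- flat base change for associated primes, the Cohen--Macaulayness ruling out embedded components, uniqueness of primary decomposition in that case, and the stability of symbolic powers under the unramified local maps $S_{I_{P_i}}\to C_{Q_{ij}}$ --- are routine.
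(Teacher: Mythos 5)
Your argument is correct, and its overall skeleton coincides with the paper's: both identify the primes lying over $I_{P_i}$ as the $abc$ homogeneous primes of the root-of-unity translates $(\zeta_a^{n_1}\alpha:\zeta_b^{n_2}\beta:\zeta_c^{n_3}\gamma)$ (distinct by (A1), (A2) and $\gcd(a,b,c)=1$), both show $I_{P_i}C_{Q_{ij}}=Q_{ij}C_{Q_{ij}}$, both get ${\rm Ass}_C(C/I_{P_i}^{(m)}C)={\rm Min}_C(C/I_{P_i}C)$ from flatness of the free degree-$abc$ extension, and both finish by contracting the localizations, using $I_{P_i}^{(m)}S_{I_{P_i}}=I_{P_i}^mS_{I_{P_i}}$ and $Q_{ij}^mC_{Q_{ij}}\cap C=Q_{ij}^{(m)}$. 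The genuine difference is the mechanism for the central local statement: you obtain the reducedness of the fibre and the equality $I_{P_i}C_{Q_{ij}}=Q_{ij}C_{Q_{ij}}$ from \'etaleness of the Kummer cover over $D(xyz)$ via the Jacobian ${\rm diag}(au^{a-1},bv^{b-1},cw^{c-1})$ and ${\rm ch}(K)\nmid abc$, whereas the paper never invokes smoothness at all: it exhibits the $abc$ primes explicitly and then uses the counting formula $abc={\rm rank}_{S/I_{P_i}}(C/I_{P_i}C)=\sum_Q \ell_C(C_Q/I_{P_i}C_Q)\cdot{\rm rank}_{S/I_{P_i}}(C/Q)$ over the free extension to conclude simultaneously that these are \emph{all} the primes over $I_{P_i}$ and that each local length is one. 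Your route buys conceptual clarity (a Galois covering with group $\mu_a\times\mu_b\times\mu_c$, trivial inertia off $V(xyz)$) and would generalize beyond this specific map; the paper's count is more elementary and self-contained, and also avoids the one loose point in your write-up: the sentence ``since $K$ is algebraically closed, this forces the fibre to consist of exactly $abc$ distinct points'' is not immediate as stated, because the scheme-theoretic fibre relevant here is over the non-closed point $I_{P_i}$, whose residue field is not algebraically closed. What actually closes this step is exactly what you (and the paper) supply next: every prime contracting to $I_{P_i}$ is a minimal prime of the homogeneous ideal $I_{P_i}C$, hence corresponds to a closed point of ${\rm proj}(C)$, and the explicit list of $abc$ distinct $K$-points together with the degree-$abc$ bound (or the paper's length count) shows there are no others. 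With that reading, your proof is complete; the remaining ingredients (flat base change for associated primes, unmixedness of $C/I_{P_i}C$, uniqueness of the minimal primary decomposition) are standard, as you say.
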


\begin{proof}
Let $Q$ be a prime ideal of $C$ lying over $I_{P_i}$.
Then, there exists a point $(\alpha:\beta:\gamma) \in {\Bbb P}_K^2$
such that
\[
Q = I_2
\left(
\begin{array}{ccc}
u & v & w \\
\alpha & \beta & \gamma
\end{array}
\right) ,
\]
where $I_2( \ )$ is the ideal generated by all the $2 \times 2$-minors
of the given matrix.
Remark that $Q$ is the kernel of the $K$-algebra homomorphism
\[
\phi_{(\alpha:\beta:\gamma)} : C \longrightarrow K[t]
\]
defined by  $\phi_{(\alpha:\beta:\gamma)}(u) = \alpha t$,
$\phi_{(\alpha:\beta:\gamma)}(v) = \beta t$,
$\phi_{(\alpha:\beta:\gamma)}(w) = \gamma t$.
Then, $I_{P_i}$ is the kernel of the $K$-algebra homomorphism
\[
\phi = \phi_{(\alpha:\beta:\gamma)} \delta : S \longrightarrow K[t]
\]
defined by $\phi(x) = \alpha^a t^a$,
$\phi(y) = \beta^b t^b$,
$\phi(z) = \gamma^c t^c$.
Let $\zeta_q$ be a primitive $q$-th root of $1$ for a positive integer $q$.

Set
\[
Q_{n_1,n_2,n_3} = I_2
\left(
\begin{array}{ccc}
u & v & w \\
\zeta_a^{n_1}\alpha & \zeta_b^{n_2}\beta & \zeta_c^{n_3}\gamma
\end{array}
\right) .
\]
It is the kernel of the $K$-algebra homomorphism $\phi_{(\zeta_a^{n_1}\alpha:\zeta_b^{n_2}\beta:\zeta_c^{n_3}\gamma)}$.
For any $n_1$, $n_2$ and $n_3$, 
$Q_{n_1,n_2,n_3}$ is a prime ideal of $C$ lying over $I_{P_i}$
since $\phi_{(\zeta_a^{n_1}\alpha:\zeta_b^{n_2}\beta:\zeta_c^{n_3}\gamma)} \delta = \phi$.
By our assumption (A2), 
all of $\alpha$, $\beta$ and $\gamma$ are not zero.
By (A1),
\[
\{ (\zeta_a^{n_1}\alpha : \zeta_b^{n_2}\beta : \zeta_c^{n_3}\gamma) 
\in {\Bbb P}^2_K \mid
n_1 = 0, \ldots, a-1; \ 
n_2 = 0, \ldots, b-1; \ 
n_3 = 0, \ldots, c-1
\}
\]
are distinct $abc$ points in ${\Bbb P}^2_K$.
Therefore,
\[
\{ 
Q_{n_1,n_2,n_3} \mid
n_1 = 0, \ldots, a-1; \ 
n_2 = 0, \ldots, b-1; \ 
n_3 = 0, \ldots, c-1
\}
\]
are distinct prime ideals of $C$ lying over $I_{P_i}$.
Here, we have
\begin{eqnarray*}
& & {\rm Ass}_C(C/I_{P_i}C) \\
& = & {\rm Min}_C(C/I_{P_i}C) \\
& = & \{ Q \in {\rm Spec}(C) \mid Q \cap S = I_{P_i} \} \\
& \supseteq & \{ 
Q_{n_1,n_2,n_3} \mid
n_1 = 0, \ldots, a-1; \ 
n_2 = 0, \ldots, b-1; \ 
n_3 = 0, \ldots, c-1
\} .
\end{eqnarray*}
Since $C$ is an $S$-free module of rank $abc$,
$C/I_{P_i}C$ is an $S/I_{P_i}$-free module of rank $abc$.
Then, it is easy to see
\[
abc = {\rm rank}_{S/I_{P_i}}(C/I_{P_i}C)
= \sum_{\scriptstyle
\begin{array}{c}
{\scriptstyle Q \in {\rm Spec}(C)} \\
{\scriptstyle Q \cap S = I_{P_i}}
\end{array}
}
\ell_C(C_Q/I_{P_i}C_Q)  \cdot {\rm rank}_{S/I_{P_i}}(C/Q) .
\]
Therefore,
\[
\{ Q \in {\rm Spec}(C) \mid Q \cap S = I_{P_i} \} =
\{ 
Q_{n_1,n_2,n_3} \mid
n_1 = 0, \ldots, a-1; \ 
n_2 = 0, \ldots, b-1; \ 
n_3 = 0, \ldots, c-1
\}
\]
and 
\[
\ell_C(C_{Q_{n_1,n_2,n_3}}/I_{P_i}C_{Q_{n_1,n_2,n_3}}) = 1
\]
for each $n_1$, $n_2$, $n_3$.
It follows from the equation as above that 
\[
 I_{P_i}C_{Q_{n_1,n_2,n_3}} = Q_{n_1,n_2,n_3}C_{Q_{n_1,n_2,n_3}}
\]
for each $n_1$, $n_2$, $n_3$.

Since
\begin{eqnarray*}
& & {\rm Ass}_C(C/I_{P_i}^{(m)}C)  \\
& =  & {\rm Min}_C(C/I_{P_i}^{(m)}C)  \\
& =  & {\rm Min}_C(C/I_{P_i}C)  \\
& = & \{ 
Q_{n_1,n_2,n_3} \mid
n_1 = 0, \ldots, a-1; \ 
n_2 = 0, \ldots, b-1; \ 
n_3 = 0, \ldots, c-1
\} ,
\end{eqnarray*}
we have
\[
I_{P_i}^{(m)}C 
= 
\bigcap_{n_1,n_2,n_3} 
\left(
I_{P_i}^{(m)}C_{Q_{n_1,n_2,n_3}} \cap C
\right)
= 
\bigcap_{n_1,n_2,n_3} 
\left(
Q_{n_1,n_2,n_3}^mC_{Q_{n_1,n_2,n_3}} \cap C
\right)
=
\bigcap_{n_1,n_2,n_3} Q_{n_1,n_2,n_3}^{(m)}
\]
\end{proof}

By Lemma~\ref{split},
\[
I^{(m)}C = \bigcap_{i = 1}^rI_{P_i}^{(e_im)}C
= \bigcap_{i = 1}^r(\bigcap_{j = 1}^{abc} Q_{ij}^{(e_im)}) .
\]
In this case,
\[
{\rm max}\{ n \in \ZZ \mid [C/(u^a,v^b,w^c)C]_n \ne 0 \} 
= a+b+c-3.  
\]
Then, by (\ref{basechange}),
\begin{equation}\label{reduce}
{\rm reg}(I^{(m)}C) = {\rm reg}(I^{(m)}) + a+b+c-3 .
\end{equation}

By (\ref{reduce}), we may assume $a = b = c = 1$
in Theorem~\ref{Theorem4} and Theorem~\ref{Theorem5}
if (A1) and (A2) are satisfied.

\vspace{2mm}

Let $q_1$, \ldots, $q_n$ be independent generic points in ${\Bbb P}^2_{\Bbb C}$.
Suppose that $n \ge 10$.
Nagata conjectured that 
\[
[I_{q_1}^m \cap \cdots \cap I_{q_n}^m]_d = 0
\]
if $d \le \sqrt{n}m$.
Nagata~\cite{N1} solved it affirmatively when $n$ is a square.

Consider the following two conditions:
\begin{itemize}
\item[(A0)]
$K = \CC$, the field of complex numbers, and
\item[(A3)]
$I = \sqrt{I}$, that is $e_1 = e_2 = \cdots = e_r = 1$ and $E = E_1 + \cdots + E_r$.
\end{itemize}

\begin{Proposition}\label{Nagata}
Suppose that $n$ is a positive integer which has a factorization $n=abcr$ by positive integers with $a,b,c$ 
pairwise relatively prime.
 If there exist distinct points $P_1,\ldots, P_r$ on the weighted projective space 
$\PP_{\CC}(a,b,c)$ satisfying $(A2)$, such that there does not exist an $E$-uniform negative curve on the blow up of $\PP_{\CC}(a,b,c)$
defined by $(A3)$, then Nagata's conjecture is true for $abcr$ general points in $\PP^2_{\CC}$.
\end{Proposition}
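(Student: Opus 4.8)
The plan is to transfer the hypothesis about the weighted blow-up back to ordinary projective space using the base-change machinery already developed, and then to argue by contradiction: if Nagata's conjecture fails for $abcr$ general points, then the $s$-invariant of the corresponding ideal strictly exceeds $\sqrt{abcr}$, and by Lemma \ref{Lemma2} (together with Lemma \ref{Lemma3}) this forces the existence of an $E$-uniform negative curve, contradicting the assumption. First I would set $N=abcr$ and let $q_1,\dots,q_N$ be general points of $\PP^2_{\CC}$; by Lemma \ref{split}, applied with $I=I_{P_1}\cap\cdots\cap I_{P_r}$ on $\PP_{\CC}(a,b,c)$ under hypotheses (A1) and (A2) (note (A0) implies (A1)), the extension $IC$ is the intersection of the $abc\cdot r$ distinct primes $Q_{ij}$, which are $N$ distinct points of $\PP^2_{\CC}$. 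Since $P_1,\dots,P_r$ may be chosen generically on $\PP_{\CC}(a,b,c)$ and the $\mu_a\times\mu_b\times\mu_c$-orbit construction produces $N$ points whose coordinates are algebraically independent enough, these $N$ points are general in $\PP^2_{\CC}$ (this genericity claim is where a little care is needed — see below). So it suffices to show that no $E$-uniform negative curve exists on the blow-up of $\PP^2_{\CC}$ at $q_1,\dots,q_N$.

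Next I would relate the negative curves on the two blow-ups. Let $X$ be the blow-up of $\PP_{\CC}(a,b,c)$ at $P_1,\dots,P_r$ (with $E=E_1+\cdots+E_r$ by (A3)) and let $X'$ be the blow-up of $\PP^2_{\CC}$ at the $N$ points $q_{ij}$. The finite flat morphism $\delta$ induces, after blowing up, a Cartesian diagram exactly as in the Remark containing (\ref{Ses}), with $X'\to X$ finite of degree $abc$, pulling back $A$ to (a multiple of) the hyperplane class and $E$ to $E'=\sum E'_{ij}$. An $E$-uniform curve $F'\sim aA'-mE'$ on $X'$ that is negative would have $(F'\cdot F')<0$; pushing forward (or using that the norm of $F'$ is $E$-uniform on $X$ and intersection numbers scale by the degree), one obtains an effective $E$-uniform divisor on $X$ with negative self-intersection, i.e. an $E$-uniform negative curve on $X$ — contradicting the hypothesis. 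Conversely, and this is the direction actually needed: by (\ref{Ses}), $s_{\PP^2}(\mathcal I C)=s_{\PP(a,b,c)}(\mathcal I)$, so by Theorem \ref{Theorem3} and Lemma \ref{Lemma2} applied on $X$, the absence of an $E$-uniform negative curve on $X$ forces (via Lemma \ref{Lemma3}, contrapositive) that $s(I)=\sqrt{abcu}=\sqrt{abcr}$ here, since $u=r$ under (A3). Hence $s_{\PP^2}(\mathcal I C)=\sqrt{N}$.

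Finally I would translate $s_{\PP^2}(\mathcal I C)=\sqrt N$ into the statement of Nagata's conjecture. Writing $J=I_{q_1}\cap\cdots\cap I_{q_N}$ on $K[u,v,w]$ with reduced structure, $s(J)=\sqrt N$ means exactly that $\sqrt N A'-E'$ lies on the boundary of the nef cone restricted to $L$, equivalently $(\alpha A'-E')$ is not nef for any $\alpha<\sqrt N$, equivalently there is no effective divisor $\sim dA'-mE'$ with $d<\sqrt N m$ and — using the Riemann–Roch computation (\ref{eq1}), (\ref{eq7}) together with the vanishing Proposition \ref{Prop2} — the linear system $|dA'-mE'|$ is empty whenever $d\le\sqrt N m$, since a nonzero section would produce an $E'$-uniform negative curve or force $s(J)>\sqrt N$. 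This is precisely $[I_{q_1}^m\cap\cdots\cap I_{q_N}^m]_d=0$ for $d\le\sqrt N m$, which is Nagata's conjecture for $N=abcr$ general points (here $N=abcr\ge 10$ under the hypothesis $n\ge 10$ implicit in the conjecture's statement). The main obstacle I expect is the genericity transfer in the first paragraph: one must check that imposing only (A2) and genericity of $P_1,\dots,P_r$ on $\PP(a,b,c)$ really yields \emph{general} (not merely distinct, or special) points $q_{ij}$ on $\PP^2$, which amounts to showing the rational map from the parameter space of the $P_i$ to $(\PP^2)^N/\!\!\sim$ is dominant; this follows from a dimension count once one observes the $abc$-fold cover is unramified over the locus (A2), but it deserves to be spelled out carefully.
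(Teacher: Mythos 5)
There are two genuine gaps, and each sits exactly at a point you flagged as delicate but then waved through. First, the genericity transfer in your opening paragraph is not just in need of care --- it is false, and the dimension count you appeal to at the end goes the wrong way. The configuration $\{Q_{ij}\}$ produced by Lemma~\ref{split} consists of the $\mu_a\times\mu_b\times\mu_c$-orbits $(\zeta_a^{n_1}\alpha:\zeta_b^{n_2}\beta:\zeta_c^{n_3}\gamma)$ of the $r$ points $P_i$; this family depends on only $2r$ parameters, while a general configuration of $N=abcr$ points in $\PP^2_{\CC}$ depends on $2abcr$ parameters, so for $abc>1$ the map from the $P_i$'s to $N$-point configurations can never be dominant and the $Q_{ij}$ are never general (they satisfy many special incidences coming from the group action). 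Moreover the hypothesis of the Proposition only provides \emph{some} points $P_i$, not generic ones. The correct mechanism, and the one the paper uses, is upper semicontinuity of $h^0$ under specialization: it suffices to exhibit \emph{one} configuration of $N$ distinct points for which $[\,\bigcap Q_{ij}^{(m)}]_d=0$, because specializing general points $q_1,\ldots,q_N$ to the $Q_{ij}$ can only increase the dimension of the linear system; so nonvanishing for generic points would force nonvanishing for the $Q_{ij}$, giving the contradiction. You have the logic reversed: you try to prove the special points are general instead of specializing the general points to them.

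Second, the step ``absence of an $E$-uniform negative curve on $X$ forces $s(I)=\sqrt{abcu}$'' is the \emph{converse} of Lemma~\ref{Lemma3}, not its contrapositive, and it is neither proved in the paper nor obviously true: when $s(I)>\sqrt{abcu}$, Lemma~\ref{Lemma2} only produces an irreducible negative curve $C$ with $\overline C^{\perp}$ cutting out $T$, and nothing forces the multiplicities of $C$ at the $P_i$ to be equal, i.e.\ $C$ need not be $E$-uniform. (The equivalence you are implicitly using is, for general points in $\PP^2$, essentially the content of Nagata-type statements and relies on monodromy arguments unavailable for these fixed points.) The detour through the $s$-invariant and (\ref{Ses}) is also unnecessary: the hypothesis yields the needed vanishing directly, since a nonzero element of $[I^{(m)}]_d$ with $d<\sqrt{abcr}\,m$ is an effective divisor $\sim dA-mE$ with self-intersection $d^2/(abc)-m^2r<0$, i.e.\ an $E$-uniform negative curve. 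With that vanishing in hand one pushes forward through $\delta$ using Lemma~\ref{split} to get $[\,\bigcap_{i,j}Q_{ij}^{(m)}]_d=0$ for $d<\sqrt{abcr}\,m$, and concludes by the specialization argument above; one also needs the preliminary reduction (which your sketch omits) to the case that $abcr$ is not a perfect square, handled by Nagata's own theorem, so that the boundary case $d=\sqrt{N}m$ --- where a section would have self-intersection zero and force nothing --- does not arise.
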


\begin{proof}
Assume that Nagata's conjecture is not true for $abcr$ general 
points in ${\Bbb P}^2_{\Bbb C}$.
If $abcr$ is a square, Nagata solved the conjecture affirmatively.
Therefore, we may assume that $\sqrt{abcr}$ is not a rational number.

Let $q_1$, \ldots, $q_{abcr}$ be independent generic points in 
${\Bbb P}^2_{\Bbb C}$.
$I_{q_i}$ is the defining ideal of $q_i$.
By our assumption, there exist positive integers $m_0$ and $d_0$ such that
\begin{equation}\label{NC}
d_0 \le \sqrt{abcr}m_0 \mbox{ \ \ and \ \ }
[I_{q_1}^{m_0} \cap \cdots \cap I_{q_{abcr}}^{m_0}]_{d_0} \ne 0 .
\end{equation}
Since $\sqrt{abcr}$ is not a rational number,
we have
\[
d_0 < \sqrt{abcr}m_0 .
\]

Assume that there does not exist an $E$-uniform negative curve 
for some $P_i$'s satisfying (A0), (A2) and (A3).
Then we have
\[
[I^{(m)}]_d = [I_{P_1}^{(m)} \cap \cdots \cap I_{P_r}^{(m)}]_d = 0
\]
if $d < \sqrt{abcr}m$.
Considering the $\CC$-algebra homomorphism
\[
\delta : S=\CC[x,y,z] \longrightarrow C= \CC[u,v,w] ,
\]
we obtain 
\[
0 = [I^{(m)}C]_d = [\bigcap_{i = 1}^rI_{P_i}^{(m)}C]_d
= [\bigcap_{i = 1}^r(\bigcap_{j = 1}^{abc} Q_{ij}^{(m)})]_d
\]
for $d < \sqrt{abcr}m$.
This contradicts to (\ref{NC}) since we can specialize
$\{ q_1, \ldots, q_{abcr} \}$ to
\[
\{ Q_{ij} \mid i = 1, \ldots, r; \ j = 1, \ldots, abc \} .
\]
\end{proof}

\begin{Remark}
\begin{rm}
Let $K$ be a field and $a$, $b$, $c$ be pairwise relatively prime integers.
Let $P_K(a,b,c)$ be the kernel of the $K$-algebra homomorphism
\[
\delta : S = K[x,y,z] \rightarrow K[t]
\]
defined by $\delta(x) = t^a$, $\delta(y) = t^b$, $\delta(z) = t^c$.

Let $X_K(a,b,c)$ be the blow-up of the weighted projective space $\PP(a,b,c)$
at the point corresponding to $P_K(a,b,c)$.

Assume that $K$ is of positive characteristic.
If there exists a negative curve on $X_K(a,b,c)$,
then the symbolic Rees ring
\begin{equation}\label{Rees}
S \oplus P_K(a,b,c) \oplus P_K(a,b,c)^{(2)} \oplus 
P_K(a,b,c)^{(3)} \oplus \cdots
\end{equation}
is Noetherian by \cite{C}.

Here, assume that the symbolic Rees ring (\ref{Rees})
is not Noetherian for some $a_0$, $b_0$, $c_0$ over some field $K_0$ 
of positive characteristic.
Since $P_{K_0}(a,b,c)$ is not a complete intersection,
we may assume $3 \le a_0 < b_0 < c_0$.
In particular, $a_0b_0c_0 \ge 60 > 10$.
Then by \cite{C}, there is no negative curve on $X_{K_0}(a_0,b_0,c_0)$.
By a standard method of mod $p$ reduction,
there is no negative curve on $X_{\CC}(a_0,b_0,c_0)$.
Then, by Proposition~\ref{Nagata},
Nagata's conjecture is true for $a_0b_0c_0$.
\end{rm}
\end{Remark}


\begin{thebibliography}{99}

\bibitem{CTV} M. V. Catalisano, N. V. Trung and G. Valla, A Sharp Bound for the Regularity Index of Fat Points in General Position, Proc. Amer. Math. Soc. 118 (1993), 717 - 724.


\bibitem{C} S. D. Cutkosky, Symbolic Algebras of Monomial Primes, J. Reine Angew. Math. 
 416 (1991), 71-89.

\bibitem{C1} S. D. Cutkosky, Irrational Asymptotic Behaviour of Castelnuovo-Mumford Regularity, J. Reine Angew. math. 522 (2000), 93 - 103.

\bibitem{CEL} S. D. Cutkosky, L. Ein and R. Lazarsfeld. Positivity and complexity of ideal sheaves, Math. Ann. 321 (2001), 213-234.

\bibitem {CHT}
S. D. Cutkosky, J. Herzog and N. V. Trung,
Asymptotic behaviour of the Castelnuovo-Mumford regularity,
Compositio Math. 118 (1999), no. 3, 243--261.


\bibitem{CS} S. D. Cutkosky and V. Srinivas, On a Problem of Zariski on Dimensions of Linear Systems, Ann. Math. 137 (1993), 531 - 559.

\bibitem{DS} G. Dalzotto and E. Sbarra, On Non-Standard Graded Algebras,
Toyama Math. J. 31 (2008). 

\bibitem{De} C. Delorme, Espaces Projectifs Anisotropes, Bull. Soc. Math., France, 103, 1975, 203 - 223.


\bibitem{DG} E. D. Davis and A.V. Geramita, The Hilbert Function of a Special Class of 1-Dimensional Cohen-Macaulay Graded Algebras, in: The Curves Seminar
at Queen's, Queen's Papers in Pure and Appl. Math. 667, 1984, 1 - 29,
Queen's Univ., Kingston, ON.

\bibitem{Do} I. Dolgachev, Weighted Projective Varieties, in Group Actions and 
Vector Fields, LNM 956, Springer Verlag (1981), 34 - 71.


\bibitem{E} D. Eisenbud, Commutative Algebra With a View Toward Algebraic Geometry, GTM 150, Springer Verlag, NY 1995,

\bibitem{EG} D. Eisenbud and S. Goto, Linear Free Resolutions and Minimal Multiplicity, J. Algebra 88 (1984), 89 - 133.
 
\bibitem{F1}
T. Fujita,
Semipositive line bundles,
J. of the Faculty of Science, The University of Tokyo 30 (1983), 353-378.



\bibitem{Gi} A. Gimigliano, Regularity of Linear Systems of Place Curves,
J. Algebra 124 (1989), 447 - 460.

\bibitem{GNW} S. Goto, K. Nishida and K. Watanabe, Non-Cohen-Macaulay Symbolic Blow-Ups for Space Monomial Curves and Counter Examples to Cowsik's Question, Proc. Amer. Math. Soc. 120 (1994), 383-392.


\bibitem{EGA} Grothendieck, A. and Dieudonn\'e, El\'ements de G\'eom\'etrie 
Alg\'ebrique  II,IHES Publ. Math. 8 (1961).

\bibitem{HR} B. Harbourne and J. Ro\'e, Linear Systems with Multiple Base Points in $\PP^2$, Adv. Geom 4 (2004), 41 - 59.

\bibitem{HW} G. H. Hardy and E. M. Wright, An Introudction to The Theory of Numbers, Oxford Univ. Press, 1938.

\bibitem{H} R. Hartshorne, Algebraic Geometry, GTM 52, Springer Verlag, 1977.

\bibitem{Hi} A. Hirschowitz, Une Conjecture Pour la Cohomologie des diviseurs sure les surfaces rationnelles g\'en\'eriques, J. Reine Angew. math. 397 (1989), 208 - 213.

\bibitem {Ko} V. Kodiyalam,
Asymptotic behaviour of Castelnuovo-Mumford regularity,
Proc. Amer. Math. Soc. 128 no. 2, (2000),  407--411.


\bibitem{KM} K. Kurano and N. Matsuoka, On Finite Generation of Symbolic Rees
Rings of Space Monomial Curves and Existence of Negative Curves, arxiv:0801.3896.




\bibitem{L} R. Lazarsfeld, Positivity in Algebraic Geometry,
Srpinger Verlag, Berlin, 2004.

\bibitem{Ma} H. Matsumura, Geometric Structure of the Cohomology Rings in
	Abstract Algebraic Geometry, Mem. Coll. Sci. Univ. Kyoto 32 (1959), 33 -84.

\bibitem{Mo} S. Mori, On a Generalization of Complete Intersections, J. Math. Kyoto Univ., 15 (1975), 619 - 646.

\bibitem{M} D. Mumford, The Topology of Normal Singularities of an Algebraic Surface, IHES Publ. Math. 9 (1961), 5 - 22.

\bibitem{N1} M. Nagata, On the 14-th Problem of Hilbert, Amer. J. Math. 81 (1959) 766 -- 772.


\bibitem{S} F. Sakai, Weil Divisors on Normal
Surfaces, Duke Math. J. 51 (1984), 877 - 887.
\end{thebibliography}
\end{document}